\newtheorem{thm}{Theorem}[section]
\newtheorem{cor}[thm]{Corollary}
\newtheorem{lem}[thm]{Lemma}
\newtheorem{prop}[thm]{Proposition}
\theoremstyle{definition}
\newtheorem{ex}[thm]{Example}
\newtheorem{nota}[thm]{Notation}
\newtheorem{defn}[thm]{Definition}
\theoremstyle{remark}
\newtheorem{rem}[thm]{Remark}
\newcommand{\G}{\Gamma}
\newcommand{\e}{{\epsilon}} 
\newcommand{\mcP}{\mathcal P}
\newcommand{\mcD}{\mathcal D}
\newcommand{\mcU}{\mathcal U}
\newcommand{\wmcU}{\widetilde {\mathcal U} }
\newcommand{\mB}{{\mathbb B}}
\newcommand{\mG}{{\mathbb G}}
\newcommand{\mR}{{\mathbb R}}
\newcommand{\mT}{{\mathbb T}}
\newcommand{\mZ}{{\mathbb Z}}
\newcommand{\man}{{\mathrm{an}}}
\newcommand{\mtrop}{{\mathrm{Trop}}}
\newcommand{\mpic}{{\mathrm{Pic}}}
\begin{document}

\title[Short version of title]{Lifting Divisors with Imposed Ramifications\\ on a Generic Chain of Loops}
\author{Xiang He}
\maketitle
\begin{abstract}
Let $C$ be a curve over an algebraically closed non-archimedean field with non-trivial valuation. Suppose $C$ has totally split reduction and the skeleton $\G$ is a chain of loops with generic edge lengths. Let $P$ be the rightmost vertex of $\G$ and $\mcP\in C$ be a point that specializes to $P$. We prove that any divisor class on $\G$ with imposed ramification at $P$ that is rational over the value group of the base field lifts to a divisor class on $C$ that satisfies the same ramification at $\mcP$, which extends the result in \cite{cartwright2014lifting}. 
\end{abstract}

\section{Introduction}
A metric graph which is a generic chain of loops (Definition \ref{generic chain}) plays a crucial role in connecting classic and tropical Brill-Noether theory. Many properties of these graphs, such as Brill-Noether generality, can be transferred to certain curves with minimal skeleton isometric to them. Related approaches can be found in \cite{cools2012tropical,jensen2014tropical,jensen2016tropical}. 

Let $\G$ be a generic chain of loops with or without bridges. Let $K$ be an algebraically closed non-archimedean field with nontrivial value group $G$ and valuation ring $R$. 
Let $C$ be a smooth projective curve of genus $g$ over $K$ which has totally split reduction (by which we mean $C$ admits a split semistable $ R$-model as in \cite[\S 5]{baker2014skeleton} whose special fiber only has rational components) and the skeleton is isometric to $\Gamma$. The tropicalization map from $\mpic^d(C)$ to $\mpic^d(\G)$ given by linear expansion of the retraction from $C^\man$ to $\G$ maps $W^r_d(C)^\man$ to $W^r_d(\G)$ \cite{baker2008specialization}, where $W^r_d(C)$ (resp. $W^r_d(\G)$) parametrizes divisor classes on $C$ (resp. $\G$) with degree $d$ and rank at least $r$. It is then proved in \cite{cartwright2014lifting} that $\mtrop(W^r_d(C))=W^r_d(\G)$ via the classification of divisors in $W^r_d(\G)$, given in \cite{cools2012tropical}. In other words, every $G$-rational divisor class on $\G$ of rank $r$ can be lifted to a divisor class on $C$ of the same rank.

On the other hand, let $\alpha=(\alpha_0,...,\alpha_r)$ be a \textbf{Schubert index} of type $(r,d)$, which is a non-decreasing sequence of non-negative numbers bounded by $d-r$. For an arbitrary chain of loops $\G$, 
Pflueger \cite{pflueger2017special} provides a straightforward expression for the locus of divisors on $\G$ with ramification at least $\alpha$ at the rightmost vertex $P$ of $\G$:
$$W^{r,\alpha}_d(\G,P)=\{D\in\mathrm{Pic}^d(\G): r( D-(\alpha_i+i)P)\geq r-i\mathrm{\ for\ } i=0,1,...,r\},$$ which is locally a union of translates of coordinate planes possibly of different dimensions 
 in $\mR^g$. 
It follows that $W^{r,\alpha}_d(\G,P)$ contains the tropicalization of the corresponding locus on $C$:
$$W^{r,\alpha}_d(C,\mcP)=\{\mcD\in\mathrm{Pic}^d(C): h^0(\mathscr O_C(\mcD-(\alpha_i+i)P))\geq r+1-i\mathrm{\ for\ } i=0,1,...,r\}.$$ 
When $\G$ is a generic chain of loops, \cite{pflueger2017special} shows that  $W^{r,\alpha}_d(\G,P)$ has dimension equal to that of $W^{r,\alpha}_d(C,\mcP)$, and both of pure dimensions $\rho(g,r,d)-\sum_{0\leq i\leq r}\alpha_i$ as expected. We prove an analogue of the lifting result of \cite{cartwright2014lifting}:

\begin{thm}\label{introduction}
Let $\G$ be a generic chain of loops, possibly with bridges, and $C$ a smooth projective curve over $K$ which has totally split reduction and skeleton isometric to $\G$. Let $\mcP$ be a point on $C$ that tropicalize to the rightmost vertex $P$ of $\G$. Let $\alpha$ be a Schubert index of type $(r,d)$. 
Then every $G$-rational divisor class on $\G$ with ramification at least $\alpha$ at $P$ lifts to a divisor class on $C$ with ramification at least $\alpha$ at $\mcP$.
\end{thm}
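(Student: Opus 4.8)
The plan is to adapt the argument of \cite{cartwright2014lifting}, replacing the Cools--Draisma--Payne--Robeva classification of $W^r_d(\G)$ by Pflueger's description of $W^{r,\alpha}_d(\G,P)$ and upgrading the rank bounds used there to bounds on the vanishing sequence at $\mcP$. The first step is to reduce to a single combinatorial stratum. Pflueger's structure theorem \cite{pflueger2017special} exhibits $W^{r,\alpha}_d(\G,P)$ as a finite union of locally closed pieces $\Sigma_\tau$, indexed by displacement tableaux $\tau$ (fillings of the relevant rectangle by $\{1,\dots,g\}$ obeying the displacement conditions), each $\Sigma_\tau$ a translate inside $\mathrm{Pic}^d(\G)\cong\mR^g/\Lambda$ of the subtorus spanned by a distinguished set of loops $S_\tau$ --- of dimension $\rho(g,r,d)-\sum_i\alpha_i$ for the top-dimensional $\tau$ and smaller in general; the $P$-reduced representative of $D\in\Sigma_\tau$ carries its chips on the loops of $S_\tau$ at positions recording the coordinates of $D$, while $\tau$ pins down the chips on the remaining loops as well as the $P$-reduced representatives of all the twists $D-(\alpha_i+i)P$. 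A $G$-rational $D$ lies in the relative interior of some $\Sigma_\tau$, so its free coordinates lie in $G$; and since every $G$-rational divisor class on $\G$ is the tropicalization of a divisor class on $C$, the theorem reduces to the following: for each $\tau$ and each $G$-rational $D\in\Sigma_\tau$, produce a lift $\mcD$ of the class of $D$ with $h^0\bigl(\mathscr O_C(\mcD-(\alpha_i+i)\mcP)\bigr)\ge r+1-i$ for $i=0,\dots,r$.

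The second step constructs $\mcD$. Fix a split semistable $R$-model $\fC$ of $C$ whose special fiber is a chain of rational curves realizing $\G$ --- each loop of $\G$ coming from a cycle of $\mathbb P^1$'s of the prescribed circumference, each bridge from a further $\mathbb P^1$ --- with $\mcP$ reducing to a smooth point of the component over $P$. Spread the $P$-reduced representative of $D$ out to a horizontal divisor on $\fC$ by placing over each chip a $K$-point of $C$ reducing to the prescribed smooth point of a component, or, for a chip interior to a loop, lying in the corresponding formal annulus at the radius equal to the chip's position --- possible precisely because that position is $G$-rational. Restriction to the generic fiber gives a class $\mcD$ with $\mtrop(\mcD)=D$, hence $\mtrop(\mcD-(\alpha_i+i)\mcP)=D-(\alpha_i+i)P$ for every $i$.

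The third and central step is the inequalities $h^0\bigl(\mathscr O_C(\mcD-(\alpha_i+i)\mcP)\bigr)\ge r+1-i$; equivalently, that for a suitable $\mcD$ inside the positive-dimensional fiber of $\mtrop$ over $D$, each $|\mcD-(\alpha_i+i)\mcP|$ is as large as the tropical picture predicts. Baker's specialization inequality $r_C(\mcD-(\alpha_i+i)\mcP)\le r_\G(D-(\alpha_i+i)P)=r-i$ supplies the reverse bound, so what is needed is a \emph{lower} bound on these ranks in terms of the combinatorics, which does not follow formally from specialization. I would obtain it by an inductive construction: build $\fC$ and $\mcD$ one loop at a time, extending at each stage the linear series on the partial curve over the newly attached loop of $\mathbb P^1$'s in a manner controlled by the part of $\tau$ seen so far, and handling the loop carrying $P$ and its adjacent bridge at the step where the ramification is imposed --- there one places the chips dictated by the $\alpha_i$, so that the vanishing sequence of the resulting series at $\mcP$ is forced to dominate $\alpha$. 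Concretely this amounts to building, component by component on the special fiber, rational functions with prescribed poles along $\mcD$, prescribed values at the gluing nodes, and prescribed vanishing orders at the node pointing toward $P$, and then --- exploiting the freedom in the choice of $\mcD$ --- checking that these survive to rational functions on $C$ with the same vanishing orders at $\mcP$.

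The main obstacle, as already in \cite{cartwright2014lifting}, is precisely this lower bound: specialization only bounds the rank from above, so one must genuinely produce linear series on $C$, and imposed ramification --- a degenerate condition easily destroyed under deformation --- forces one to track not merely the dimension of each $|\mcD-(\alpha_i+i)\mcP|$ but the full flag of vanishing at $\mcP$ through the inductive step and through the passage from the special to the generic fiber. A secondary point needing care is that the strata $\Sigma_\tau$ may have dimension strictly smaller than $\dim W^{r,\alpha}_d(C,\mcP)$, so one cannot finish by a bare dimension count; the construction must be run for every $\tau$, so that no stratum --- and hence no $G$-rational class in $W^{r,\alpha}_d(\G,P)$ --- is missed.
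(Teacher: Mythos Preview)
Your proposal takes a route entirely different from the paper's, and its central step has a genuine gap. The paper never constructs an explicit lift $\mcD$ or explicit sections on $C$. Instead it proves $\mtrop(W^\lambda(C))=W^\lambda(\G)$ by tropical intersection theory: writing $W_j(C)=\bigcap_{i\le j}W^{\lambda_i}(C)$, it shows $\mtrop(W_j(C))=W_j(\G)$ by induction on $j$, the base case being \cite{cartwright2014lifting}. For the inductive step, both $W_j(C)$ and $W^{\lambda_{j+1}}(C)$ lie in an ambient Brill--Noether locus $W^{\mu_j}(C)\cong W^{r-j-1}_{d-\alpha_j-j-1}(C)$. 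The paper proves (Proposition~\ref{wrd local}) that $W^{\mu_j}(C)^{\man}$ is locally isomorphic to a polytopal domain over the relative interior of each maximal face of $W^{\mu_j}(\G)$, by comparing analytic and tropical intersection numbers with translated theta divisors. It then proves a lifting theorem for proper tropical intersections inside a polytopal domain (Theorem~\ref{lifting polyhedral domain}), using continuity of analytic intersection numbers under the $\mcU_0$-action. A combinatorial lemma (Lemma~\ref{intersect interior}) shows that $W_j(\G)$ and $W^{\lambda_{j+1}}(\G)$ meet properly inside $W^{\mu_j}(\G)$ with a dense open part of the intersection lying over $\mathrm{relint}(W^{\mu_j}(\G))$, and the lifting theorem finishes the step.

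The gap in your proposal is the third step. You propose to obtain $h^0(\mcD-(\alpha_i+i)\mcP)\ge r+1-i$ by assembling rational functions on the components of the special fiber with prescribed poles, node values, and vanishing, and then ``checking that these survive to rational functions on $C$ with the same vanishing orders at $\mcP$''. That passage from special to generic fiber is exactly the difficulty, and nothing in your outline explains why it succeeds: compatible rational functions on the components of a nodal curve need not lift to the generic fiber, and it is not clear that the freedom in choosing $\mcD$ within a tropicalization fiber suffices to force all $r+1$ vanishing conditions simultaneously. The paper \cite{cartwright2014lifting} you invoke as a template does not argue this way either --- it also works via intersection numbers, not by exhibiting sections. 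Separately, your worry about small-dimensional strata is unnecessary here: for a \emph{generic} chain of loops every $\underline m$-displacement tableau on $\lambda$ is injective, so $W^\lambda(\G,P)$ is pure of dimension $g-|\lambda|$.
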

 

We now explain the general strategy used in the proof of Theorem \ref{introduction}. Let $$\alpha^j=(\alpha_0,...,\alpha_j,\alpha_j,...,\alpha_j)$$be a Schubert index of type $(r,d)$ whose last $r-j+1$ coordinates are all $\alpha_j$s. Denote $W_j(C)=W^{r,\alpha^j}_d(C,\mcP)$ and $W_j(\G)=W^{r,\alpha^j}_d(\G,P)$. Denote also $$X_j(C)=(\alpha_j+j)\mcP+W^{r-j}_{d-\alpha_j-j}(C)\mathrm{\ and\ } Y_j(C)=(\alpha_j+j+1)\mcP+W^{r-j-1}_{d-\alpha_j-j-1}(C)$$
 $$X_j(\G)=(\alpha_j+j)P+W^{r-j}_{d-\alpha_j-j}(\G)\mathrm{\ and\ }Y_j(\G)=(\alpha_j+j+1)P+W^{r-j-1}_{d-\alpha_j-j-1}(\G)$$ 
Note that
$W_j(C)
=W_{j-1}(C)\cap X_j(C)$
and that 
$$
W_{j}(\G)
=W_{j-1}(\G)\cap X_j(\G)
=W_{j-1}(\G)\cap\mtrop (X_j(C)).
$$
We proceed by induction. At each step, assuming  
$\mtrop (W_{j-1}(C))=W_{j-1}(\G)$,
it suffices to show that   
\begin{equation}\mtrop(W_{j-1}(C)\cap X_j(C))=\mtrop(W_{j-1}(C))\cap \mtrop(X_j(C)).\end{equation}
Note that $Y_{j-1}(C)$ is locally isomorphic to a \textbf{polytopal domain}, which is the preimage of an integral $G$-affine polytope in $\mR^n$ of the tropicalization map $(\mG_m^n)^\man\rightarrow \mR^n$ for some $n$, at points in the relative interior of maximal faces of $Y_{j-1}(\G)=\mtrop(Y_{j-1}(C))$.
Since $W_{j-1}(\G)$ and $X_j(\G)=\mtrop(X_j(C))$ \textbf{intersect properly} in $Y_{j-1}(\G)$, namely intersect in expected dimension, the problem boils down to lifting proper tropical intersections within a polytopal domain:

\begin{thm}\label{lifting polyhedral domain}
Let $\mathcal U_\Delta \subset (\mathbb G_m^n)^{\mathrm{an}}$ be the preimage of an integral $G$-affine polytope $\Delta$ in $\mR^n$ of dimension $n$ and $\mathcal X$ and $\mathcal X'$ be two Zariski closed analytic subspaces of $\mathcal U_\Delta$ of pure dimension. Suppose $\mtrop(\mathcal X)$ and $\mtrop(\mathcal X')$ intersect properly in $\Delta$. Then we have $$\mathrm{Trop}(\mathcal X)\cap\mathrm{Trop}(\mathcal X')\cap\Delta^\circ=\mathrm{Trop}(\mathcal X\cap\mathcal X')\cap \Delta^\circ.$$
\end{thm}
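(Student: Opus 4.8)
The inclusion $\mtrop(\mathcal X\cap\mathcal X')\subseteq\mtrop(\mathcal X)\cap\mtrop(\mathcal X')$ is formal and persists after intersecting with $\Delta^{\circ}$, so the content of the theorem is the opposite inclusion, and it suffices to show that every $w\in\mtrop(\mathcal X)\cap\mtrop(\mathcal X')\cap\Delta^{\circ}$ lies in $\mtrop(\mathcal X\cap\mathcal X')$, i.e.\ lifts to a point of $\mathcal X\cap\mathcal X'$. I would begin with two harmless reductions. Extension of the base field does not change the tropicalization of an analytic space, and a point of the intersection over a field extension maps to a point of the intersection over $K$; hence I may assume $w$ lies in the (divisible) value group $G$. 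Translating $\mathcal X$ and $\mathcal X'$ by a $K$-point of $\mG_m^n$ of tropicalization $w$ moves them into $\mathcal U_{\Delta-w}$ and reduces to the case $w=0\in(\Delta-w)^{\circ}$.

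The crucial feature is that $0$ lies in the \emph{interior} of the polytope, so that near the fibre over $0$ the polytopal domain is indistinguishable from the torus: for a small integral $G$-affine polytope $0\in Q^{\circ}\subseteq Q\subseteq(\Delta-w)^{\circ}$, the affinoid $\mathcal U_Q=\mtrop^{-1}(Q)$ lies in $(\mG_m^n)^{\man}$ and contains an open neighbourhood of $\mtrop^{-1}(0)$; since the tropicalizations near $0$, the initial degenerations at $0$, and the intersection near $\mtrop^{-1}(0)$ are all local around that fibre, I may replace $\mathcal X,\mathcal X'$ by $\mathcal X\cap\mathcal U_Q$ and $\mathcal X'\cap\mathcal U_Q$. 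I would then pass to initial degenerations at $0$. The reduction $\mathrm{in}_0(\mathcal X)$ is a finite-type closed subscheme of the torus $\mG_{m,\wtK}^n$ over the residue field $\wtK$, and by the theory of tropicalizations of affinoid spaces one has $\dim\mathrm{in}_0(\mathcal X)=\dim\mathcal X$ and $\mtrop(\mathrm{in}_0(\mathcal X))=\mathrm{Star}_0\,\mtrop(\mathcal X)$, the local fan of $\mtrop(\mathcal X)$ at $0$; interiority of $0$ is exactly what makes this local fan a genuine fan and the reduction a subscheme of the full torus. The same holds for $\mathcal X'$, and unwinding definitions, properness of $\mtrop(\mathcal X)\cap\mtrop(\mathcal X')$ at $0$ says precisely that $\mtrop(\mathrm{in}_0\mathcal X)$ and $\mtrop(\mathrm{in}_0\mathcal X')$ intersect properly at $0$.

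One is thereby reduced to a statement over the (trivially valued) residue field: if $Z,Z'$ are closed subschemes of $\mG_{m,\wtK}^n$ whose tropical fans intersect properly and both contain $0$, then $0\in\mtrop(Z\cap Z')$. This is the lifting theorem for proper tropical intersections in a torus, due to Osserman and Payne, which I would invoke; for a self-contained argument one can reprove it via the reduction $Z\cap Z'\cong(Z\times Z')\cap\delta$ to the diagonal subtorus $\delta\subset\mG_m^n\times\mG_m^n$, a monomial coordinate change turning $\delta$ into a coordinate subtorus, and an induction cutting by one character at a time, the one-equation case amounting to the compatibility of tropicalization with cutting by a hypersurface in the proper case. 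Finally one transports the conclusion upward: properness forces the formation of initial ideals to commute with the sum of ideals, so $\mathrm{in}_0(\mathcal X\cap\mathcal X')$ and $\mathrm{in}_0(\mathcal X)\cap\mathrm{in}_0(\mathcal X')$ have the same support, and as the latter is nonempty by the residue-field statement, so is the former; hence $0\in\mtrop(\mathcal X\cap\mathcal X')$. (Alternatively one can run the same diagonal-and-hyperplanes induction directly in the analytic, polytopal setting, the base case being: cutting a pure-dimensional Zariski-closed analytic subspace of a polytopal domain by a single hypersurface whose tropicalization meets it properly is compatible with tropicalization.)

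I expect the transport-back step to be the main obstacle. In general $\mathrm{in}_0(I+I')$ only contains $\mathrm{in}_0(I)+\mathrm{in}_0(I')$, so a priori $\mathrm{in}_0(\mathcal X\cap\mathcal X')$ could be strictly smaller than $\mathrm{in}_0(\mathcal X)\cap\mathrm{in}_0(\mathcal X')$, and even empty; it is precisely the properness hypothesis --- together with the equality $\dim=\dim\mtrop$ over the residue field and the absence of excess components in a proper torus intersection --- that rules this out, so this is the one place where ``proper intersection'' does essential work, i.e.\ where the gap between $\mtrop(\mathcal X\cap\mathcal X')$ and $\mtrop(\mathcal X)\cap\mtrop(\mathcal X')$ is closed. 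A second, more technical point is to check that the properties of initial degenerations used above --- finiteness, the dimension equality, compatibility with $\mathrm{Star}_0$, and the good behaviour under intersection --- are all available for Zariski-closed analytic subspaces of an affinoid and not merely for algebraic subvarieties of a torus; this is where the hypotheses that $\mathcal X,\mathcal X'$ be Zariski closed of pure dimension, and that $w\in\Delta^{\circ}$, are genuinely used.
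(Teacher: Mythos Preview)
Your approach is genuinely different from the paper's, and the gap you flag at the ``transport-back'' step is real and not resolved by what you have written. The paper never passes to the residue field or to initial degenerations; instead it runs a moving/continuity argument entirely in the analytic category. In the complementary-dimension case it forms the family $\mathcal Y=(\pi,\mu)(\mcU_0\times\mathcal X)$ of translates $t\cdot\mathcal X$ for $t$ ranging over $\mcU_0=\mtrop^{-1}(0)$, together with the constant family $\mathcal Y'=\mcU_0\times\mathcal X'$. Both are flat over $\mcU_0$, and $\mathcal Y\cap\mathcal Y'$ is finite over $\mcU_0$ because $\mtrop(t\cdot\mathcal X)=\mtrop(\mathcal X)$ and the tropical intersection is finite and contained in $\Delta^\circ$. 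Osserman--Rabinoff's analytic continuity theorem then says $i\bigl((t\cdot\mathcal X)\cdot\mathcal X';\mcU_\Delta\bigr)$ is constant in $t$. Choosing $\xi\in|\mathcal X|$ and $\xi'\in|\mathcal X'|$ over $w$ and $t\in|\mcU_0|$ with $t\cdot\xi=\xi'$ makes this number positive; hence it is positive at $t=1$, so $\mathcal X\cap\mathcal X'\neq\emptyset$. The positive-dimensional case is reduced to this one by cutting with an auxiliary $\mathcal Z$ of complementary codimension through a $G$-rational point.

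About your argument: the assertion that properness forces $\mathrm{in}_0(\mathcal X\cap\mathcal X')$ and $\mathrm{in}_0(\mathcal X)\cap\mathrm{in}_0(\mathcal X')$ to have the same support is exactly the content of the theorem, not an independent input. In the algebraic setting (Osserman--Payne) this commutation is \emph{deduced from} the lifting theorem, not used to prove it, so invoking it here is circular. Knowing that $\mathrm{in}_0(\mathcal X)\cap\mathrm{in}_0(\mathcal X')\neq\emptyset$ gives nothing about $\mathrm{in}_0(\mathcal X\cap\mathcal X')$, since the containment $\mathrm{in}_0(\mathcal X\cap\mathcal X')\subseteq\mathrm{in}_0(\mathcal X)\cap\mathrm{in}_0(\mathcal X')$ runs the wrong way. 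Your parenthetical alternative --- the diagonal trick plus induction on hypersurface sections, carried out directly in the polytopal setting --- is a viable route, but its base case (cutting a pure-dimensional Zariski-closed analytic subspace of $\mcU_\Delta$ by one hypersurface with proper tropical intersection) already requires a properness/continuity argument of the same strength as the paper's; so the two approaches ultimately converge on the same analytic input, and the paper's torus-deformation argument is the cleanest way to supply it.
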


This theorem is proved in Section 4. See also \cite{osserman2013lifting} for an algebraic counterpart, where the authors proved a lifting theorem for subschemes of an algebraic torus whose tropicalizations intersect properly.  See Section 3 for the discussion of local property of $Y_{j-1}(C)$, which works for all Brill-Noether loci $W^{r'}_{d'}(C)$ on $C$. The proof of (1) is in Section 5, where we use the notation of ramification imposed by a partition instead of a Schubert index (Section 2). 


\subsection*{Conventions.} Throughout this paper $K$ will be an algebraically closed non-archimedean field $K$ with nontrivial value group $G$. For a lattice $N$ we denote $T_N$ the algebraic torus over $K$ whose lattice of characters, denoted by $M$, is dual to $N$. Denote also $N_\mR=N\otimes \mR$. All polytopes in $N_\mR$ are assumed integral $G$-affine.

\subsection*{Acknowledgements.} I would like to thank Brian Osserman for helpful conversations and Sam Payne for 
suggesting that the analytic continuity theorem of intersection numbers (\cite[Proposition 5.8]{osserman2011lifting}) may lead to a proof of Theorem \ref{lifting polyhedral domain}.

\section{preliminaries}
In this section we recall some notions and techniques which are useful for later arguments.
\subsection{Special divisors on a generic chain of loops}
Let $\Gamma$ be a metric graph that is a chain of $g$ loops with or without bridges, let $\{v_i\}_{1\leq i \leq g}$ and $\{w_i\}_{1\leq i\leq g}$ be vertices of $\Gamma$ as in Figure \ref{fig1} (with the possibility that $w_i=v_{i+1}$). Let $l_i$ (resp. $n_i$) be the length of the top (resp. bottom) segment of the $i$th loop connecting the vertices $v_i$ and $w_i$. 
The divisors on $\G$ with imposed ramification is classified in \cite{pflueger2017special}, we recall some related concepts from loc.cit..


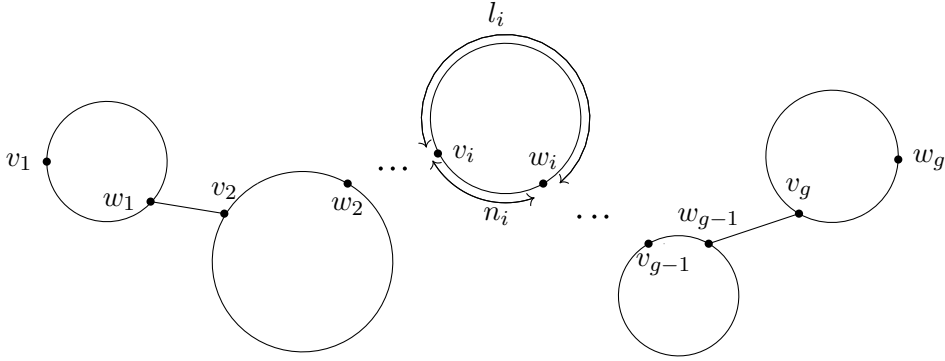
\begin{figure}[h]
\begin{tikzpicture}[scale=0.4]
\draw (-7,0) arc (240:-60:2cm and 2cm);
\draw (-7,0) arc (-120:-60:2cm and 2cm);
\draw (-8,1.73) node[circle, fill=black, scale=0.3, label=left:{$v_1$}]{};
\draw (-4.55,0.4) node[circle, fill=black, scale=0.3, label=left:{$w_1$}]{};
\draw (-4.55,0.4)--(-2.1,0);
\draw (-1,1) arc (120:60:3cm and 3cm);
\draw (-1,1) arc (-240:60:3cm and 3cm);
\draw (-2.1,0) node[circle, fill=black, scale=0.3, label=above:{$v_2$}]{}; 
\draw (2,1) node[circle, fill=black, scale=0.3, label=below:{$w_2$}]{};
\draw (6,1) arc (240:-60:2.5cm and 2.5cm);
\draw (6,1) arc (-120:-60:2.5cm and 2.5cm);
\draw (5,2) node[circle, fill=black, scale=0.3, label=right:{$v_{i}$}]{};
\draw (8.5,1) node[circle, fill=black, scale=0.3, label=above:{$w_{i}$}]{};
\draw (6.95,-0.8) node[ scale=0.3, label=above:{$n_{i}$}]{}; 
\draw (6.95,5.8) node[  scale=0.3, label=above:{$l_{i}$}]{};

\draw[->] [domain=-50:200] plot ({2.8*cos(\x)+7.25}, {2.8*sin(\x)+3.16});
\draw[<-] [domain=-50:200] plot ({2.8*cos(\x)+7.25}, {2.8*sin(\x)+3.16});
\draw[->] [domain=-150:-70] plot ({2.8*cos(\x)+7.25}, {2.8*sin(\x)+3.16});
\draw[<-] [domain=-150:-70] plot ({2.8*cos(\x)+7.25}, {2.8*sin(\x)+3.16});

\draw (12,-1) node[circle, fill=black, scale=0.3]{};
\draw (12,-1) arc (120:60:2cm and 2cm);
\draw (12,-1) arc (-240:60:2cm and 2cm);
\draw (12.5,-1) node[circle, fill=black, scale=0, label=below:{$v_{g-1}$}]{};
\draw (14,-1) node[circle, fill=black, scale=0.3, label=above:{$w_{g-1}$}]{};
\draw (17,0) arc (240:-60:2.2cm and 2.2cm); 
\draw (17,0) arc (-120:-60:2.2cm and 2.2cm);
\draw (17,0) node[circle, fill=black, scale=0.3, label=above:{$v_{g}$}]{};
\draw (20.3,1.8) node[circle, fill=black, scale=0.3, label=right:{$w_g$}]{};
\draw (14,-1)--(17,0); 
\draw (3.1,1.5) node[circle, fill=black, scale=0.15]{};
\draw (3.5,1.5)node[circle, fill=black, scale=0.15]{};
\draw (3.9,1.5)node[circle, fill=black, scale=0.15]{};
\draw (9.7,-0.1) node[circle, fill=black, scale=0.15]{};
\draw (10.2,-0.1)node[circle, fill=black, scale=0.15]{};
\draw (10.6,-0.1)node[circle, fill=black, scale=0.15]{};
\end{tikzpicture}
\caption{A chain of $g$ loops with bridges.} 
\label{fig1}
\end{figure}

\begin{defn}\label{torsion profile}
The \textbf{torsion profile} of $\Gamma$ is a sequence $\underline m=(m_2,...,m_g)$ of $g-1$ integers. If $l_i/n_i$ is a rational number, then $m_i$ is the minimum positive integer such that $m_i\cdot l_i$ is an integer multiple of $l_i+n_i$, otherwise $m_i=0$.
\end{defn}

Note that we omit $m_1$ because it is immaterial to the properties of the divisors of interest. The following notion of a generic chain of loops was introduced in \cite{cools2012tropical} for constructing Brill-Noether general curves:

\begin{defn}\label{generic chain}
We say that $\Gamma$ is \textbf{generic} if  none of the ratios $l_i/n_i$ is equal to the ratio of two positive integers whose sum does not exceed $2g-2$, or equivalently if for each $i$ either $m_i>2g-2$ or $m_i=0$. 
\end{defn}

Let $\lambda$ be a partition, which is a finite, non-increasing sequence of non-negative integers. As in  \cite{pflueger2017special}, we will identify partitions with their Young diagrams in French notation.

\begin{figure}[h]

\begin{tikzpicture}[scale=0.6]

\draw (0,0)--(0,4);
\draw (1,0)--(1,4);
\draw (2,0)--(2,4);
\draw (3,0)--(3,4);
\draw (4,0)--(4,4);
\draw (5,0)--(5,4);
\draw (6,0)--(6,4);
\draw(0,0)--(6,0);
\draw(0,1)--(6,1);
\draw(0,2)--(6,2);
\draw(0,3)--(6,3);
\draw(0,4)--(6,4);
\draw (6.3,0)--(6.3,4);
\draw (7.3,0)--(7.3,3);
\draw (8.3,0)--(8.3,3);
\draw (9.3,0)--(9.3,1);
\draw (6.3,0)--(9.3,0);
\draw (6.3,1)--(9.3,1);
\draw (6.3,2)--(8.3,2);
\draw (6.3,3)--(8.3,3);
\draw (6.3,3.5) node[circle, fill=black, scale=0., label=right:{$\alpha_0$}]{};
\draw (8.3,2.5) node[circle, fill=black, scale=0., label=right:{$\alpha_1$}]{};
\draw (8.5,1.7) node[circle, fill=black, scale=0., label=right:{$\vdots$}]{};
\draw (9.3,0.5) node[circle, fill=black, scale=0., label=right:{$\alpha_r$}]{};
\draw[decorate,decoration={brace,amplitude=3pt},thick] 
    (-0.2,0) node(t_k_unten){} -- 
    (-0.2,4) node(t_k_opt_unten){}; 
\draw (-0.2,2) node[circle, fill=black, scale=0., label=left:{$r+1$}]{};

\draw[decorate,decoration={brace,amplitude=3pt,mirror},thick] 
    (0,-0.2) node(t_k_unten){} -- 
    (6,-0.2) node(t_k_opt_unten){}; 
\draw (3,-0.2) node[circle, fill=black, scale=0., label=below:{$g-d+r$}]{};

\end{tikzpicture}
\caption{The partition as in \cite[Figure 1]{pflueger2017special} associated to $g,d,r,\alpha$ where $r=3$, $d=g-3$ and $\alpha=(0,2,2,3)$.}
\label{}
\end{figure}
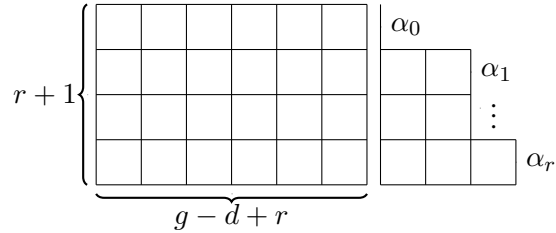

\begin{defn}\label{bn locus partition graph}
Let $P$ be a point on $\G$. The \textbf{Brill-Noether locus} corresponding to a partition $\lambda$ and the marked graph $(\Gamma,P)$ is 
$$W^\lambda(\G,P)=\{D\in\mpic^0(\G):r(D+d'P)\geq r'\mathrm{\ whenever\ } (g-d'+r',r'+1)\in \lambda\}.$$
\end{defn}

Let $\alpha=(\alpha_0,...,\alpha_r)$ be a Schubert index of type $(r,d)$ 
and $\lambda=(\lambda_0,...,\lambda_r)$ be the induced partition where $\lambda_i=(g-d+r)+\alpha_{r-i}$. Then the locus $W^\lambda(\G,P)$ is isomorphic to 
$W^{r,\alpha}_d(\G,\mcP)$
under the Abel-Jacobi map with respect to $dP$. In particular if $\lambda$ is a $(r+1)\times(g-d+r)$ diagram then $W^\lambda(\G,P)$ is isomorphic to $W^r_d(\G)$. 

We next describe the Brill-Noether locus of a partition when $P=w_g$ is the rightmost vertex of $\G$. As in \cite{pflueger2017special} we identify $\lambda=(\lambda_0,...,\lambda_r)$ with the set $$\{(x,y)\in \mathbb Z_{> 0}^2|1\leq x\leq \lambda_{y-1}, 1\leq y\leq r+1\}.$$
\begin{defn}\label{displacement tableaux}
Let $\lambda$ be a partition, and let $\underline m=(m_2,...,m_g)$ be a $(g-1)$-tuple of nonnegative integers. An $\underline m$-\textbf{displacement tableaux} on $\lambda$ is a function $t\colon \lambda\rightarrow \{1,2,...,g\}$ satisfying the following properties:  
 \begin{itemize}
\item[(1)] $t$ is strictly increasing in any given row or column of $\lambda$.
\item[(2)] For any two distinct boxes $(x,y)$ and $(x',y')$ in $\lambda$, if $t(x,y)=t(x',y')$ then $x-y\equiv x'-y'\pmod{m_{t(x,y)}}$.
\end{itemize}
We denote by $t\vdash_ {\underline m} \lambda$ if $t$ is a $\underline m$-displacement tableaux on $\lambda$.
\end{defn}

According to \cite[Theorem 1.3 and Corollary 3.8]{pflueger2017special} if $\G$ is a generic chain of loops and $\underline m$ is its torsion profile, then every $\underline m$-displacement tableaux on $\lambda$ is injective.

\begin{defn}\label{tableux torus}
Let $\underline m$ be the torsion profile of $\G$. Let $t$ be a $\underline m$-displacement tableau on a partition $\lambda$. Denote by $\mT(t)$ the set of divisor classes on $\G$ of the form $$\sum_{i=1}^g \langle \xi_i\rangle_i-gw_g$$ where $\{\xi_j\}_j$ are real numbers such that $\xi_{t(x,y)}\equiv x-y\pmod{m_{t(x,y)}}$ and the symbol $\langle z\rangle_i$ denotes the point on the $i$-th loop that is located $z\cdot l_i$ units clockwise from $w_i$.  
\end{defn}

It follows that $\mT(t)$ is a real torus of dimension $d_t=g-|t(\lambda)|$. Moreover, under the identification $\mpic^0(\G)=\prod_{1\leq i\leq g}\mR/(m_i+l_i)\mZ$ induced by the Abel-Jacobi map \cite[\S 6]{mikhalkin2008tropical}, the torus $\mT(t)$ is the image of a translate of a coordinate $d_t$-plane in $\mR^g$. 
The following is the description of the Brill-Noether locus of $\lambda$ (\cite[Theorem 1.4]{pflueger2017special}).

\begin{prop}\label{dimension}
We have 
$$W^\lambda(\G,w_g)=\bigcup_{t\vdash_{\underline m}\lambda}\mT(t).$$
In particular, if $\G$ is generic then $W^\lambda(\G,w_g)$ is of pure dimension $g-|\lambda|$.
 
\end{prop}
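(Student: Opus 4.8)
The plan is to establish the two directions of the set equality $W^\lambda(\G,w_g)=\bigcup_{t\vdash_{\underline m}\lambda}\mT(t)$ separately, and then deduce the dimension claim.

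\textbf{Step 1: $\mT(t)\subset W^\lambda(\G,w_g)$ for every $t\vdash_{\underline m}\lambda$.} Fix a displacement tableau $t$ and a divisor class $D=\sum_{i=1}^g\langle\xi_i\rangle_i-gw_g\in\mT(t)$. I must check that for every box corresponding to the condition $(g-d'+r',r'+1)\in\lambda$ we have $r(D+d'P)\geq r'$, i.e.\ that $D+d'P$ moves in a linear system of rank at least $r'$. The right tool is Dhar's burning algorithm / the theory of $v_0$-reduced divisors on a chain of loops: concretely, one shows that for each row $y=r'+1$ of the sub-Young-diagram determined by the entries $t(x,y)$, the positions $\xi_{t(x,y)}\equiv x-y\pmod{m_{t(x,y)}}$ are exactly arranged so that chip-firing moves across the consecutive loops indexed by $t(\lambda)$ produce $r'+1$ effective representatives imposing independent conditions. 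This is the content of \cite[\S 3]{pflueger2017special}; I would cite the structure of $w_g$-reduced divisors there and verify that the congruence conditions defining $\mT(t)$ force the reduced divisor of $D+d'P$ to have at least $r'$ chips available to push leftward, hence rank $\geq r'$. The genericity hypothesis enters through injectivity of $t$ (cited after Definition \ref{displacement tableaux}), which guarantees that distinct boxes use distinct loops and no interference occurs.

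\textbf{Step 2: $W^\lambda(\G,w_g)\subset\bigcup_{t}\mT(t)$.} Conversely, take $D\in W^\lambda(\G,w_g)$ and replace it by its $w_g$-reduced representative $D_0=\sum_{i=1}^g a_i-g w_g$ where each $a_i$ lies on the $i$-th loop (with the convention from \cite{cools2012tropical} on how a reduced divisor on a chain of loops distributes one chip per loop, plus possible chips at $w_g$). For each condition $(g-d'+r',r'+1)\in\lambda$, the rank inequality $r(D+d'P)\geq r'$ forces, via Dhar's algorithm, that the positions $a_i$ on a prescribed set of loops must be congruent to prescribed residues modulo $m_i$ — this is precisely Pflueger's analysis reversing Step 1. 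Recording, for each loop $i$ that is thus constrained, the box $(x,y)\in\lambda$ that constrains it defines a function $t$ on a sub-diagram of $\lambda$; checking properties (1) and (2) of Definition \ref{displacement tableaux} amounts to the monotonicity of reduced divisors along the chain and the mod-$m_i$ congruences, and one extends $t$ to all of $\lambda$ (filling remaining boxes with further loop indices, using injectivity and genericity to keep strict monotonicity) to obtain $t\vdash_{\underline m}\lambda$ with $D\in\mT(t)$.

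\textbf{Step 3: dimension.} Each $\mT(t)$ has dimension $g-|t(\lambda)|$. When $\G$ is generic, $t$ is injective, so $|t(\lambda)|=|\lambda|$ and every $\mT(t)$ has dimension exactly $g-|\lambda|$; hence the finite union is of pure dimension $g-|\lambda|$.

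\textbf{Main obstacle.} The crux is Step 1 and Step 2, i.e.\ the precise translation between the rank conditions defining $W^\lambda(\G,w_g)$ and the congruence conditions defining the tori $\mT(t)$ via the combinatorics of $w_g$-reduced divisors and Dhar's burning algorithm on a chain of loops. Getting the bookkeeping exactly right — which loop is constrained by which box, and how chip-firing across bridges and around individual loops interacts with the torsion orders $m_i$ — is the delicate part; everything else (the dimension count, the reduction to $w_g$-reduced representatives) is formal once that dictionary is in place. Since this analysis is carried out in \cite[Theorems 1.3, 1.4 and \S3]{pflueger2017special}, the cleanest route is to invoke those results directly and spell out only the identification of $\mT(t)$ with Pflueger's parametrization.
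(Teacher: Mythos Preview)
Your proposal matches the paper's treatment: the paper does not give a proof but simply cites this as \cite[Theorem~1.4]{pflueger2017special}, precisely the route you recommend at the end. One minor correction to your sketch: the set-theoretic equality in Steps~1--2 holds for an arbitrary chain of loops (non-injective tableaux are permitted by Definition~\ref{displacement tableaux}), so genericity is needed only in Step~3 for the dimension count, where you already invoke it correctly.
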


\subsection{Curves with special skeletons and their tropicalizations.}
Let $C$ be a smooth projective curve of genus $g$ over $K$ which has totally split reduction and the skeleton is isometric to $\Gamma$. Let $\tau\colon C^\man\rightarrow \Gamma$ be the retraction map. 
The Jacobian variety of $C$ is totally degenerate in the sense of \cite[\S 6]{gubler2007tropical}.
In other words, $\mathrm{Pic}^0(C)^{\mathrm{an}}$ is isomorphic to $(T_N)^{\man}/L$ where $N$ is a lattice of rank $g$ and $L$ is a discrete subgroup of $T_N(K)$ which maps isomorphically onto a complete lattice of $N_\mR$ under the tropicalization map. Moreover, the induced tropicalization map on $\mpic^0(C)^\man$ is compatible with the retraction to its skeleton, which is canonically identified with $\mpic^0(\Gamma)$ (\cite[\S 6]{baker2014skeleton}): 
$$\begin{tikzcd} 
 C^\man \rar{\alpha_\mcP}\dar{\tau} &\mpic^0(C)^\man\dar{\mtrop}&(T_N)^\man\lar{}\dar{\mtrop}\\ \Gamma\rar{\alpha_P} &\mpic^0(\Gamma)&N_\mR.\lar{}
\end{tikzcd}$$
where $\alpha_\mcP$ and $\alpha_P$ are the Abel-Jacobi maps associated to $\mcP\in C$ and $P\in \Gamma$ with $\tau(\mcP)=P$.

\begin{defn}\label{bn locus partition}
Let $\mcP$ be a point in $C$. As in Definition \ref{bn locus partition graph} the \textbf{Brill-Noether locus} corresponding to a partition $\lambda$ and the marked curve $(C,\mcP)$ is 
$$W^\lambda(C,\mcP)=\{\mcD\in\mpic^0(C):h^0(\mathscr O_C(\mcD+d'\mcP))\geq r'+1\mathrm{\ whenever\ } (g-d'+r',r'+1)\in \lambda\}.$$
\end{defn} 

Let $\alpha=(\alpha_0,...,\alpha_r)$ be a Schubert index of type $(r,d)$ and $\lambda=(\lambda_0,...,\lambda_r)$ be the induced partition as above. Then as in the graph case the locus $W^\lambda(C,\mcP)$ is isomorphic to 
$W^{r,\alpha}_d(C,\mcP)$ 
under the Abel-Jacobi map with respect to $d\mcP$. In particular if $\lambda$ is a $(r+1)\times(g-d+r)$ diagram then $W^\lambda(C,\mcP)$ is isomorphic to $W^r_d(C)$. 

The following theorem is a (partial) summary of \cite[Theorem 1.13 and Theorem 5.1]{pflueger2017special} and \cite[Theorem 1.1]{cartwright2014lifting}.

\begin{thm}\label{summary}
Let $C$ be as above and $\G$ is a generic chain of loops, let $\mcP$ be a point of $C$ that tropicalize to $P=w_g$. Then $\mtrop(W^\lambda(C,\mcP))\subset W^\lambda(\G,P)$ and $W^\lambda(C,\mcP)$ is of pure dimension $g-|\lambda|$. Moreover, if $\lambda$ is an $(r+1)\times(g-d+r)$ diagram then $\mtrop(W^\lambda(C,\mcP))= W^\lambda(\G,P)$. 
\end{thm}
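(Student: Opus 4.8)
The plan is to recognize the statement as the concatenation of the three cited results, after reconciling the partition and Schubert-index conventions. Let $\alpha=(\alpha_0,\dots,\alpha_r)$ be the Schubert index of type $(r,d)$ determined by $\lambda_i=(g-d+r)+\alpha_{r-i}$, so that $|\lambda|=(r+1)(g-d+r)+\sum_i\alpha_i$ and hence $g-|\lambda|=\rho(g,r,d)-\sum_i\alpha_i$. Subtraction of the effective divisor $d\mcP$ (resp.\ $dP$) is an isomorphism of algebraic groups $\mpic^d(C)\xrightarrow{\ \sim\ }\mpic^0(C)$ (resp.\ $\mpic^d(\G)\xrightarrow{\ \sim\ }\mpic^0(\G)$) carrying $W^{r,\alpha}_d(C,\mcP)$ to $W^\lambda(C,\mcP)$ and $W^{r,\alpha}_d(\G,P)$ to $W^\lambda(\G,P)$. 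Since $\tau(\mcP)=P$, the divisor $d\mcP$ tropicalizes to $dP$, and since $\mtrop$ restricted to degree-$0$ classes is a group homomorphism identified with the retraction $\mpic^0(C)^\man\to\mpic^0(\G)$, these isomorphisms intertwine the two tropicalization maps. It therefore suffices to prove the three assertions with $W^\lambda$ everywhere replaced by $W^{r,\alpha}_d$.

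For the inclusion: let $\mcD\in W^{r,\alpha}_d(C,\mcP)$, so for each $i$ one has $h^0(\mathscr O_C(\mcD-(\alpha_i+i)\mcP))-1\geq r-i$. Tropicalization commutes with subtraction of $(\alpha_i+i)\mcP$, so $\mtrop(\mcD-(\alpha_i+i)\mcP)=\mtrop(\mcD)-(\alpha_i+i)P$, and Baker's specialization inequality $r(\mtrop(D'))\geq h^0(\mathscr O_C(D'))-1$ for a divisor class $D'$ on $C$ (\cite{baker2008specialization}), applied to $D'=\mcD-(\alpha_i+i)\mcP$, gives $r(\mtrop(\mcD)-(\alpha_i+i)P)\geq r-i$ for all $i$, i.e.\ $\mtrop(\mcD)\in W^{r,\alpha}_d(\G,P)$. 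This recovers \cite[Theorem 1.13]{pflueger2017special}.

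For the dimension statement, Proposition \ref{dimension} gives that $W^{r,\alpha}_d(\G,P)=\bigcup_{t\vdash_{\underline m}\lambda}\mT(t)$ has pure dimension $\rho(g,r,d)-\sum_i\alpha_i$ for $\G$ generic (in particular it is empty once $g-|\lambda|<0$, whence by the inclusion $W^{r,\alpha}_d(C,\mcP)$ is empty too). After the shift above, $W^{r,\alpha}_d(C,\mcP)$ is a Zariski closed subvariety of the totally degenerate abelian variety $\mpic^0(C)$, so the structure theory of tropicalizations over totally degenerate abelian varieties (see \cite{gubler2007tropical}) yields $\dim\mtrop\big(W^{r,\alpha}_d(C,\mcP)\big)=\dim W^{r,\alpha}_d(C,\mcP)$; combined with the inclusion and Proposition \ref{dimension} this forces $\dim W^{r,\alpha}_d(C,\mcP)\leq\rho-\sum_i\alpha_i$. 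Conversely, the $r+1$ ramification conditions defining $W^{r,\alpha}_d(C,\mcP)$ are determinantal, so every irreducible component has dimension at least $\rho-\sum_i\alpha_i$ by the standard lower bound for Brill--Noether loci with imposed ramification, valid over any algebraically closed field. Hence $W^{r,\alpha}_d(C,\mcP)$ is of pure dimension $\rho-\sum_i\alpha_i=g-|\lambda|$; this recovers \cite[Theorem 5.1]{pflueger2017special}.

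Finally, if $\lambda$ is the full $(r+1)\times(g-d+r)$ rectangle then $\alpha=(0,\dots,0)$, and since $h^0(\mcD)-h^0(\mcD-i\mcP)\le i$ (with the analogous rank inequality on $\G$) one has $W^\lambda(C,\mcP)\cong W^r_d(C)$ and $W^\lambda(\G,P)\cong W^r_d(\G)$, with tropicalization maps identified exactly as in the first paragraph and independently of the marked points; the asserted equality $\mtrop(W^\lambda(C,\mcP))=W^\lambda(\G,P)$ is then precisely $\mtrop(W^r_d(C))=W^r_d(\G)$, which is \cite[Theorem 1.1]{cartwright2014lifting}. I expect this last assertion to be the genuine obstacle: the inclusion is a specialization inequality and the dimension statement is combinatorial together with Gubler's equidimensionality, but the surjectivity onto $W^r_d(\G)$ is the substantive input from \cite{cartwright2014lifting} --- exactly the lifting phenomenon that the present paper extends to imposed ramification.
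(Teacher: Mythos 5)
Your proposal is correct and matches the paper's treatment: the paper offers no proof of this theorem, presenting it explicitly as a summary of \cite[Theorems 1.13 and 5.1]{pflueger2017special} and \cite[Theorem 1.1]{cartwright2014lifting}, and your reconstruction of how those three results combine (specialization inequality for the inclusion, Gubler's equidimensionality plus the determinantal lower bound for pure dimension, and the Cartwright--Jensen--Payne lifting theorem for the equality) is accurate, including the convention-matching under the Abel--Jacobi shift.
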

 
As the tropicalization preserves dimension \cite[\S 6]{gubler2007tropical}, in the theorem above, if $\lambda$ is induced by $\alpha$ then the dimension of $W^\lambda(\G,P)$ is equal to the (expected) dimension of $W^\lambda(C,\mcP)$ (or $W^{r,\alpha}_d(C,\mcP)$).

\subsection{Intersection multiplicities in a polytopal domain.} Let $N$ be a lattice of rank $n$. Let $\Delta\subset N_\mR$ be a polytope. We denote $\mcU_\Delta$ the preimage of $\Delta$ in $(T_N)^\man$ under the tropicalization map, then $\mcU_\Delta$ is an affinoid domain in $(T_N)^\man$ by \cite{gubler2007tropical} and called a \textbf{polytopal domain}. Denote $K\langle \mcU_\Delta\rangle$ the corresponding affinoid algebra, whose basic properties can be found in  \cite[Proposition 6.9]{rabinoff2012tropical}.

\begin{defn}\label{polytopal intersection multiplicity}
Let $f_1,...,f_k\in K\langle \mcU_\Delta\rangle$. Let $Y$ be a Zariski-closed analytic subspace of $\mcU_\Delta$ of dimension $n-k$. Denote $Y_i=V(f_i)$ and $Z=Y\cap(\cap_{1\leq i\leq k}Y_i)$. The \textbf{intersection multiplicity} of $Y$ and $Y_i$ at an isolated point $\xi$  of $Z$ is $$i(\xi,Y\cdot Y_1\cdots Y_k;\mcU_\Delta)=\dim_K(\mathcal O_{Z,\xi}).$$ 
If $Z$ is finite we define the \textbf{intersection number} of $Y$ and $Y_1,...,Y_k$ as:
$$i(Y\cdot Y_1\cdots Y_k;\mcU_\Delta)=\sum_{\xi\in Z} \dim_K(\mathcal O_{Z,\xi}).$$ 
\end{defn}

This definition agrees with \cite[Definition 11.4]{rabinoff2012tropical} and is also compatible with the intersection multiplicities of algebraic varieties. We refer to \cite[\S 11]{rabinoff2012tropical} about intersection multiplicities of tropical hypersurfaces in $\Delta$ when $\Delta$ is of maximal dimension, which is compatible with the stable intersection of tropical cycles in $N_\mR$.

\begin{thm}\label{rabinoff intersection}
(\cite[Theorem 11.7]{rabinoff2012tropical}) Let $\Delta$ be a polytope in $N_\mR$ and $f_1,...,f_n\in K\langle \mcU_\Delta\rangle$. Let $Y_i=V(f_i)$ for all $i$ and $w\in \cap_{1\leq i\leq n}\mtrop(Y_i)$ be an isolated point contained in the interior of $\Delta$. Let $Z=\cap_{1\leq i\leq n}Y_i$. Then:
$$\sum_{\xi\in Z,\mtrop(\xi)=w}i(\xi,Y_1\cdots Y_n;\mcU_\Delta)=i(w,\mtrop(Y_1)\cdots \mtrop(Y_n); \Delta)$$
\end{thm}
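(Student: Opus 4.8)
The plan is to reduce the general statement to the case where the $f_i$ are Laurent polynomials, and then invoke the analytic continuity of intersection numbers together with the known compatibility between algebraic intersection numbers and stable tropical intersection. First I would recall the two key facts available from \cite{rabinoff2012tropical}: (a) the intersection number $i(Y_1\cdots Y_n;\mcU_\Delta)$ depends only on the analytic subspaces and is additive over the isolated points of $Z$ in the sense of Definition \ref{polytopal intersection multiplicity}; and (b) when $\Delta$ has maximal dimension $n$, the tropical intersection multiplicity $i(w,\mtrop(Y_1)\cdots\mtrop(Y_n);\Delta)$ is the local stable-intersection multiplicity at $w$ of the tropical hypersurfaces $\mtrop(Y_i)$, computed via mixed volumes / the fan displacement rule. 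The local statement at a single $w$ in the interior of $\Delta$ is what we must produce.

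The central step is a \emph{perturbation/continuity} argument. Since $w$ lies in the interior of $\Delta$, after translating we may assume $w=0$ is an interior point, and we may localize: there is a smaller polytope $\Delta'\subset\Delta^\circ$ containing $w$ in its interior such that $Z\cap\tau^{-1}(\Delta')$ consists exactly of the points $\xi$ of $Z$ with $\mtrop(\xi)=w$, because $w$ is isolated in $\cap_i\mtrop(Y_i)$. On $\mcU_{\Delta'}$, approximate each $f_i$ in the affinoid norm by a Laurent polynomial $g_i$; by the properties of $K\langle\mcU_{\Delta'}\rangle$ (\cite[Proposition 6.9]{rabinoff2012tropical}) one can choose the $g_i$ close enough that $\mtrop(V(g_i))$ and $\mtrop(V(f_i))$ agree near $w$, so the tropical side is unchanged, while on the analytic side the total intersection multiplicity over $\tau^{-1}(\Delta')$ is preserved. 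This is exactly where the analytic continuity theorem for intersection numbers, \cite[Proposition 5.8]{osserman2011lifting}, is used: small perturbations of the defining functions in an affinoid domain do not change the total intersection number counted in that domain, provided the intersection stays finite, which holds here because properness of the tropical intersection at $w$ forces $Z$ to be finite over a neighborhood of $w$.

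Once the $g_i$ are Laurent polynomials, the analytic space $V(g_1)\cap\dots\cap V(g_n)\cap\mcU_{\Delta'}$ is (the analytification of) an open piece of a closed subscheme of $T_N$, and the intersection number in $\mcU_{\Delta'}$ agrees with the algebraic intersection number of the hypersurfaces $V(g_i)\subset T_N$ counted at the torus-points mapping into $\Delta'$. For hypersurfaces in a torus, this algebraic count is governed by the Bernstein--Kushnirenko theorem and refines, point by point over $N_\mR$, to the stable intersection multiplicity of the Newton-polytope tropical hypersurfaces; this is the algebraic incarnation of the tropical--algebraic compatibility recorded in \cite[\S 11]{rabinoff2012tropical} (see also the framework of Osserman--Payne). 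Summing the algebraic multiplicities at all torus-points over $w$ therefore equals $i(w,\mtrop(V(g_1))\cdots\mtrop(V(g_n));\Delta)=i(w,\mtrop(Y_1)\cdots\mtrop(Y_n);\Delta)$, and by the continuity step this equals $\sum_{\xi\in Z,\ \mtrop(\xi)=w} i(\xi,Y_1\cdots Y_n;\mcU_\Delta)$, which is the claim.

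The main obstacle I anticipate is making the perturbation step fully rigorous: one must show simultaneously that (i) the Laurent-polynomial approximations $g_i$ can be chosen so that near $w$ their tropical hypersurfaces literally coincide with those of the $f_i$ (so the right-hand side is genuinely unchanged, not merely close), and (ii) the analytic intersection over the shrunken polytopal domain $\mcU_{\Delta'}$ is finite and its total multiplicity is invariant under the deformation $f_i\rightsquigarrow g_i$. Point (i) needs a careful use of the description of $\mtrop(V(f))$ as the locus where the minimum in the tropicalization of $f$ is achieved twice, plus the fact that this locus is locally determined by finitely many of the coefficients; point (ii) is precisely the content one must extract from \cite[Proposition 5.8]{osserman2011lifting}, ensuring its hypotheses (a connected affinoid family, no intersection escaping to the boundary) are met — the latter following from $w$ being isolated in the tropical intersection and interior to $\Delta$.
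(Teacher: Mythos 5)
The first thing to say is that the paper contains no proof of this statement: Theorem \ref{rabinoff intersection} is quoted verbatim as \cite[Theorem 11.7]{rabinoff2012tropical}, so there is no in-paper argument to compare yours against. What can be assessed is whether your sketch would stand on its own, and whether it matches the techniques that the cited source (and this paper, elsewhere) actually use.

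Your outline has the right overall shape --- localize near $w$ using that it is isolated in $\cap_i\mtrop(Y_i)$, replace each $f_i$ by a Laurent polynomial $g_i$ whose tropical hypersurface agrees with $\mtrop(V(f_i))$ near $w$, and transport the intersection number through a deformation --- and this is essentially the toolkit the present paper deploys in the proof of Proposition \ref{wrd local} (the truncation $f_i\rightsquigarrow f_i'$ via \cite[\S 8]{rabinoff2012tropical}) and in Lemma \ref{intersection family}. Two points, however, carry the real weight and are not discharged. First, the continuity step: \cite[Proposition 5.8]{osserman2011lifting} is not a statement that ``small perturbations preserve the intersection number''; it applies to a flat family over a connected base with the intersection finite (and interior) over every fiber. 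You must exhibit the family explicitly, e.g.\ $V(g_i+t h_i)$ with $h_i=f_i-g_i$ over the unit ball, and verify that $\mtrop(V(g_i+th_i))$ is unchanged for all $t$ so that no intersection point escapes --- exactly the setup of Lemma \ref{intersection family}. You flag this as an obstacle but do not resolve it. Second, and more seriously, your terminal step asserts that for Laurent polynomial hypersurfaces the sum of local algebraic multiplicities over $w$ equals the stable tropical multiplicity at $w$, citing Bernstein--Kushnirenko. Bernstein--Kushnirenko gives only the \emph{global} count of solutions; the \emph{pointwise} refinement over each $w\in N_\mR$ is precisely the content of the theorem you are trying to prove (it is the Osserman--Payne lifting theorem with multiplicities in the algebraic case, or is obtained in \cite{rabinoff2012tropical} by a further deformation to tropically transverse position where the multiplicity is a lattice index/mixed volume). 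As written, your argument reduces the theorem to its own polynomial case and then treats that case as known without a correct reference or proof; either cite the pointwise algebraic result explicitly or supply the transverse-position computation.
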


On the other hand, for two Zariski closed subspace $\mathcal X$ and $\mathcal X'$ of $\mcU_\Delta$ and an isolated point $\xi$ of $ \mathcal X\cap\mathcal X'$, the \textbf{intersection multiplicity} of $\mathcal X$ and $\mathcal X'$ at $\xi$ is defined to be:
$$i(\xi, \mathcal X\cdot\mathcal X';\mcU_\Delta)=\sum_{i\geq 0}(-1)^i\dim_K\mathrm{Tor}_i^{\mathscr O_{\mcU_\Delta,\xi}}(\mathscr O_{\mathcal X,\xi},\mathscr O_{\mathcal X',\xi})$$ 
in \cite[\S 5]{osserman2011lifting}. If $\mathcal X\cap\mathcal X'$ is finite, the \textbf{intersection number} of $\mathcal X$ and $\mathcal X'$ is 
$$i(\mathcal X\cdot\mathcal X';\mcU_\Delta)=\sum_{\xi\in \mathcal X\cap\mathcal X'}i(\xi,\mathcal X\cdot\mathcal X';\mcU_\Delta).$$

\section{Local properties of $W^r_d(C)^{\mathrm{an}}$}  
Let $C$ and $\G$ and $T_N$ be as in \S 2.2 and suppose $\G$ is generic. We prove in this section that $W^r_d(C)$ is locally analytically isomorphic to a polytopal domain at a ``tropically general'' point of $W^r_d(\G)$. 
Before we start, we specify the following notation:

\begin{nota}\label{notation}
Let $P$ be the rightmost vertex of $\G$ and fix $\mcP\in C$ that tropicalize to $P$. Let $e_1,...,e_g\in N$ be the standard basis of $N$ and $e_1',...,e_g'\in M$ the dual basis. For each partition $\lambda$ we write $W^{\lambda}(C)$ (resp. $W^{\lambda} (\G)$) instead of $W^{\lambda}(C,\mcP)$ (resp. $W^{\lambda} (\G,P)$). 
For a polytope $\Delta\subset N_\mR$, if $\Delta$ maps to $\mpic^0(\G)$ isomorphically with image $\overline \Delta$ we denote $\overline \mcU_\Delta$ the preimage of $\overline \Delta$ in $\mpic^0(C)^\man$ under the tropicalization map. 
Let $L_\Delta$ be the subspace of $N_\mR$ that is parallel to $\Delta$ and has the same dimension as $\Delta$. Let $N_\Delta=N\cap L_\Delta$. For a given projection $\pi \colon N\rightarrow N_\Delta$ we denote $\widetilde \mcU_\Delta$ the preimage of $\pi(\Delta)$ in $(T_{N_\Delta})^\man$. For a pure polyhedral complex $\gamma$ in $N_\mR$ denote $\mathrm{relint}(\gamma)$ the union of relative interior of all maximal faces of $\gamma$. 
 \end{nota}

Now let $\lambda$ be the $(r+1)\times(g-d+r)$ diagram. As discussed in Section 2 we have $W^\lambda(C)$ isomorphic to $W^r_d(C)$ and $\mtrop(W^\lambda (C))=W^\lambda(\G)$, and $W^\lambda(\G)$ is a union of translates of the images of the coordinate $\rho$-planes in $N_\mR$, where $\rho=\rho(g,r,d)$.

\begin{figure}[h]
\begin{tikzpicture} 


\draw[dashed] (1.5,1.5)--(1.5,5.5);\draw(1.5,5.5)--(1.5,6.5);
\draw[dashed](1.5,1.5)--(2.5,1.5);\draw(2.5,1.5)--(7.5,1.5);
\draw (1.5,6.5)--(7.5,6.5);
\draw (7.5,1.5)--(7.5,6.5);


\draw(0,2)--(6,2);
\draw(0,2)--(1,3);\draw[dashed](1,3)--(2,4);
\draw (6,2)--(8,4);
\draw (3,4)--(8,4);\draw[dashed](2,4)--(3,4);


\draw (1,0)--(1,5);
\draw(1,0)--(3,2);
\draw(1,5)--(3,7);
\draw[dashed](3,2)--(3,6.5);\draw(3,6.5)--(3,7);

\draw(1,2)--(2.5,3.5);\draw[dashed](2.5,3.5)--(3,4);
\draw (2.5,1.5)--(2.5,2);\draw[dashed](2.5,2)--(2.5,3.5);\draw(2.5,3.5)--(2.5,6.5);
\draw (2.5,3.5)--(7.5,3.5);\draw[dashed](1.5,3.5)--(2.5,3.5);
\draw [red](3.1,2.25)--(4.5,2.25);
\draw [red](3.1,2.25)--(4.1,3.25);
\draw [red](4.1,3.25)--(5.5,3.25);
\draw [red](4.5,2.25)--(5.5,3.25);
\draw (4.3,3) node[circle, fill=black, scale=0, label=below:{$\Lambda$}]{};
\draw (5.7,2.75) node[circle, fill=black, scale=0, label=below:{$\delta$}]{};
\draw (8.2,3.7) node[circle, fill=black, scale=0, label=below:{$W^\lambda(\G)$}]{};

\end{tikzpicture}
\end{figure}

Let $\delta$ be a maximal face of $W^\lambda(\G)$. Take a polytope $\Lambda$ such that $\Lambda\subset\mathrm{relint}(\delta)$. Let $\Delta=\Lambda\times I\subset N_\mR$ where $I=[-\e,\e]^{g-\rho}$ such that $\Delta$ maps to  
$\mpic^0(\G)$ isomorphically. 
We then have that $\mcU_\Delta$ is isomorphic to $\overline\mcU_\Delta$. Hence we may consider $W^\lambda (C)^\man$ as a Zariski-closed analytic subspace of the polytopal domain $\mcU_\Delta$. We may assume that $L_\Lambda$ is generated by $e_1,...,e_\rho$. The canonical projection from $N$ to $N_\Lambda$ gives rise to a projection $\pi_\Lambda\colon (T_N)^\man\rightarrow (T_{N_\Lambda})^\man$ which is compatible with the tropicalization map. Denote $W_\Lambda=W^\lambda(C)^\man\cap\mcU_\Lambda$. The argument in \cite[Theorem 4.31]{baker2011nonarchimedean} shows that $\pi_\Lambda\colon W_\Lambda\rightarrow \widetilde\mcU_\Lambda$ is finite, and maps every irreducible component of $W_\Lambda$ surjectively onto $\wmcU_\Lambda$.

\begin{prop}\label{wrd local} 
The map $\pi_\Lambda\colon W_\Lambda\rightarrow \wmcU_\Lambda$ is an isomorphism.
\end{prop}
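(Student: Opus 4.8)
The plan is to show that the finite surjective map $\pi_\Lambda\colon W_\Lambda\to\wmcU_\Lambda$ has degree one, after which it suffices to check that $W_\Lambda$ is reduced and that $\pi_\Lambda$ is injective — or, cleanest of all, to exhibit a section. First I would observe that $\wmcU_\Lambda$ is a polytopal domain in $(T_{N_\Lambda})^{\man}$ associated to the polytope $\pi_\Lambda(\Lambda)=\Lambda$, which is smooth (it is an affinoid subdomain of a torus), and that $\pi_\Lambda$ is finite by the cited argument from \cite[Theorem 4.31]{baker2011nonarchimedean}. Since $W^\lambda(C)$ has pure dimension $\rho$ by Theorem \ref{summary} and every irreducible component of $W_\Lambda$ dominates $\wmcU_\Lambda$, the map is finite flat over a dense open once we know $W_\Lambda$ is reduced and equidimensional; the key numerical input is that the degree of $\pi_\Lambda$, computed fiberwise, equals $1$.

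The degree computation is where I would use the tropical intersection theory of \S 2.3. The fiber of $\pi_\Lambda$ over a point $\wty\in\wmcU_\Lambda$ is $W_\Lambda\cap\pi_\Lambda^{-1}(\wty)$; cutting $\mcU_\Delta$ by the $g-\rho$ affinoid functions $e_{\rho+1}'-c_{\rho+1},\dots,e_g'-c_g$ (coordinates transverse to $\Lambda$) together with the $\rho$ functions pulling back the coordinates on $\wmcU_\Lambda$ exhibits a point of this fiber as an isolated point of an intersection of $g$ hypersurfaces in $\mcU_\Delta$. By Theorem \ref{rabinoff intersection}, the total intersection number over a tropically generic $w\in\Delta^\circ$ equals the stable tropical intersection number $i(w,\mtrop(W^\lambda(C))\cdots;\Delta)$, i.e. the tropical intersection of the coordinate $\rho$-plane $\delta$ (locally $=\mtrop(W^\lambda(C))$ near $w$, by the choice $\Lambda\subset\operatorname{relint}(\delta)$) with the transverse coordinate $(g-\rho)$-plane through $w$ and with $\rho$ generic hyperplanes defining $\wty$. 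Because $\delta$ is a coordinate $\rho$-plane with lattice weight $1$ in $W^\lambda(\G)$ (this multiplicity-one statement is exactly what must be invoked, and follows from Proposition \ref{dimension}: near a maximal face, $W^\lambda(\G)$ is a translate of a single coordinate plane, appearing with multiplicity one in the tropicalization of $W^\lambda(C)$ since tropicalization preserves dimension and degree), that stable tropical intersection number is $1$. Hence the scheme-theoretic fiber of $\pi_\Lambda$ over a generic point has length $1$, so the generic fiber of $\pi_\Lambda$ is a single reduced point and $\deg\pi_\Lambda=1$.

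To upgrade ``degree one over a dense open'' to ``isomorphism'' I would argue as follows. Finiteness plus degree one means $\pi_\Lambda$ is birational onto the normal (indeed smooth) target $\wmcU_\Lambda$; since a finite birational morphism onto a normal affinoid is an isomorphism (the structure sheaf pushes forward to a finite extension of $\mcO_{\wmcU_\Lambda}$ with the same fraction field, hence equal to it by normality), we conclude $W_\Lambda\cong\wmcU_\Lambda$. One must check $W^\lambda(C)$ is reduced near $W_\Lambda$ to run the normality/birationality argument; this follows because the generic fiber of $\pi_\Lambda$ being reduced of length $1$ forces the generic point of each component of $W_\Lambda$ to be reduced, and then reducedness of the whole (pure-dimensional) $W_\Lambda$ over the reduced base $\wmcU_\Lambda$ follows from flatness, or one simply works with $(W_\Lambda)_{\mathrm{red}}$ throughout and notes the resulting isomorphism forces $W_\Lambda$ itself to be reduced.

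The main obstacle I anticipate is the multiplicity-one input: one must be sure that the coordinate $\rho$-plane $\delta$ occurs with tropical weight exactly $1$ in $\mtrop(W^\lambda(C))=W^\lambda(\G)$, and that $\mcX\mapsto\mtrop(\mcX)$ together with Theorem \ref{rabinoff intersection} correctly identifies the fiber length with the stable tropical intersection number — this requires knowing the faces of $W^\lambda(\G)$ are lattice-translates of \emph{unimodular} coordinate planes (true here because $\mT(t)$ is the image of a coordinate plane under the map $\mpic^0(\G)=\prod\mR/(m_i+l_i)\mZ$), and that the chosen box $I=[-\e,\e]^{g-\rho}$ meets $W^\lambda(\G)$ only in $\delta$, which is guaranteed by taking $\Lambda$ deep in the relative interior of the maximal face and $\e$ small. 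Everything else — finiteness, the dimension count, the normality argument — is either cited or standard.
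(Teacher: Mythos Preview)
Your outline has the right overall architecture --- prove $\deg\pi_\Lambda=1$, then use normality of the target and Cohen--Macaulayness of $W^\lambda(C)$ to promote this to an isomorphism --- and the endgame matches the paper's. The gap is in the degree computation, specifically in the ``multiplicity-one input'' you flag yourself as the main obstacle.

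The tropical multiplicity of the maximal face $\delta$ in $\mtrop(W^\lambda(C))$ is, essentially by definition (Sturmfels--Tevelev, or \cite[\S4]{baker2011nonarchimedean}), the degree of the finite projection $\pi_\Lambda\colon W_\Lambda\to\wmcU_\Lambda$. So asserting that $\delta$ ``appears with multiplicity one in the tropicalization of $W^\lambda(C)$'' is exactly the statement $\deg\pi_\Lambda=1$ you are trying to prove; it does not follow from Proposition~\ref{dimension}, which only describes $W^\lambda(\G)$ as a \emph{set} (a union of coordinate tori) and says nothing about the weights these faces carry as the tropicalization of $W^\lambda(C)$. Unimodularity of the face in the lattice $N$ is a separate issue and does not pin down the tropical weight. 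Relatedly, Theorem~\ref{rabinoff intersection} as stated is for hypersurfaces $V(f_1),\dots,V(f_n)$; to apply it with $W_\Lambda$ as one of the intersectands you would need a weighted version in which the weight on $\mtrop(W_\Lambda)$ is precisely the unknown degree.

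The paper breaks this circularity with a \emph{global} count. One intersects $W^\lambda(C)$ with $\rho$ general translates of the theta divisor; by \cite[\S2]{cartwright2014lifting} the tropical intersection $W^\lambda(\G)\cap\bigcap_i\Theta_\G^i$ is transverse at exactly $m=g!\prod_{i=0}^r i!/(g-d+r+i)!$ points, which is also the algebraic intersection number $i(W^\lambda(C)\cdot\Theta_C^1\cdots\Theta_C^\rho)$ on $\mpic^0(C)$. Near each of the $m$ tropical points one deforms the local defining equations $f_i$ of $\Theta_C^i$ (via Lemma~\ref{intersection family}) to functions $g_i$ depending only on the $\Lambda$-coordinates, and then the projection formula gives the local contribution as $m_{\Lambda_j}\cdot 1$. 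Summing, $\sum_j m_{\Lambda_j}=m$ with $m$ summands, so each $m_{\Lambda_j}=1$. This global-to-local comparison is the missing idea in your proposal; without it there is no independent handle on the weight of $\delta$.
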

\begin{proof}
We first show that $\pi_\Lambda$ is of degree one in the sense of \cite[\S 3.27]{baker2011nonarchimedean}. Take $\rho$ general translates of thera divisors $\Theta_\G^1=\mtrop(\Theta_C^1),...,\Theta_\G^\rho=\mtrop(\Theta_C^\rho)$ on $\mpic^0(\G)$ such that $\Lambda\cap(\cap_i\Theta_\G^i)$ is nonempty and consists of finitely many points, where $\Theta_C^i$ are theta divisors on $\mpic^0(C)$. According to \cite[\S 2]{cartwright2014lifting} we may also assume that $\cap_i\Theta_\G^i$ intersects $W^\lambda(\G)$ transversally at $m$ points, where 
$$m=g!\prod_{i=0}^{r}\frac{i!}{(g-d+r+i)!}=i(W^\lambda(C)\cdot \Theta_C^1\cdots \Theta_C^\rho; \mpic^0(C)).
$$  

Since the degree of $\pi_\Lambda$ is preserved under flat base change, we may shrink $\Lambda$ so that $\Lambda\cap(\cap_i\Theta_\G^i)$ consists of exactly one point. Also, take $\e$ small enough such that $\Theta_\G^i\cap\Delta$ is of the form $\widetilde \Theta^i_\G\times I_\e$ where $\widetilde \Theta^i_\G$ is a codimension one polyhedral complex in $\Lambda$. Note that by \cite[\S 6]{wilke2009totally} we know that $K\langle \mcU_\Delta\rangle$ is a UFD, hence we can take $f_i\in K\langle\mcU_\Delta\rangle$ to be the function that defines $(\Theta_C^i)^\man$ in $\mcU_\Delta$. 

According to \cite[\S 8]{rabinoff2012tropical} there is a Laurent polynomial $f_i'$ which is a sum of monomials in $f_i$ such that $\mtrop(V(f_i'))\cap\Delta=\mtrop(V(f_i))=\widetilde \Theta^i_\G\times I_\e$. Moreover for all $w\in \Delta$ the monomials in $f_i$ which obtains minimal $w$-weight is the same as those in $f_i'$. Let $A=\{{u_1},..., {u_k}\}\subset M$ be the set of vertices of the Newton complex of $f_i'$ corresponding to the maximal faces of $\Delta$, whose polyhedral complex structure is induced by $\widetilde \Theta^i_\G\times I_\e$. We must have that $A$ is contained in a $\rho$ dimensional plane in $M_\mR$ that is parallel to the one generated by $e_1',...,e_\rho'$. We may assume that $A$ is contained in the sublattice generated by $ e'_1,...,e'_\rho$. Consequently, if $g_i=\sum x^{u_i}$ and $h_i=f_i-g_i$, then for every $a\in K$ with $\mathrm{val}(a)\geq 0$ we have $\mtrop(V(g_i+ah_i))=\mtrop(V(f_i))$ (with the same multiplicities, which are all ones by \cite[Theorem 3.1]{cartwright2014lifting}). Moreover, $g_i$ is contained in $K\langle\wmcU_\Lambda\rangle$. 

We next denote by $\mB_K^1$ the unit ball in $(\mG_m)_K^\man$ with coordinate ring $K\langle t\rangle$, and prove the following lemma:

\begin{lem}\label{intersection family}
Let $W$ be a Cohen-Macaulay Zariski-closed analytic subspace of $\mcU_\Delta$ of pure codimension $k$. Let $l_1,...,l_k\in K\langle\mcU_\Delta\rangle\times K\langle t\rangle$ be global sections on $\mcU_\Delta\times \mB_K^1$. Let $Y_i$ be the subspace of $\mcU_\Delta \times \mB_K^1$ defined by $l_i$. For $t\in \mB_K^1$ let $Y_i(t)=\pi^{-1}(t)\cap Y_i$ where $\pi$ is the projection from $\mcU_\Delta \times \mB_K^1$ to $\mB_K^1$. Suppose $\mtrop(W)\cap(\cap_i\mtrop(Y_i(t)))$ is finite and contained in $\Delta^\circ$ for all $i$ and $t\in |\mB_K^1|$. Then the intersection number $i(W\cdot Y_1(t)\cdots Y_k(t);\mcU_\Delta)$ is constant on $|\mB_K^1|$.
\end{lem} 
\begin{proof}
We proceed by showing that the analytic space $\widetilde W=(W\times \mB_K^1)\cap(\cap_i Y_i)$ is finite and flat over $\mB_K^1$, hence every fiber has the same length. 

It is obvious that $\widetilde W$ has finite fiber. To show it is proper over $\mB_K^1$ we use the ideas in \cite[\S 4.9]{osserman2011lifting}. Since all analytic space appeared are affinoid, hence compact Hausdorff, we have that $\pi$ is compact on $\widetilde W$ and separated. On the other hand, let $\pi'\colon \mcU_\Delta\times \mB_K^1\rightarrow \mcU_\Delta$ be the other projection. By \cite[Lemma 4.14]{osserman2011lifting} we have $$\widetilde W\subset (\mtrop\circ\pi')^{-1}(\Delta^\circ)\subset \mathrm{Int}(\mcU_\Delta\times \mB_K^1/\mB_K^1).$$ According to the sequence of morphisms $\widetilde W\rightarrow \mcU_\Delta\times \mB_K^1\rightarrow \mB_K^1$ we have 
$$\mathrm{Int}(\widetilde W/\mB_K^1)=\mathrm{Int}(\widetilde W/\mcU_\Delta\times \mB_K^1)\cap \mathrm{Int}(\mcU_\Delta\times \mB_K^1/\mB_K^1)=\mathrm{Int}(\widetilde W/\mcU_\Delta\times \mB_K^1)=\widetilde W.$$
Hence $\pi$ is boundaryless on $\widetilde W$. Consequently $\pi$ is proper, and hence finite on $\widetilde W$.

Now the flatness of $\pi$ follows from \cite[Exercise 1.2.12]{liu2002algebraic} and induction. Note that the finiteness of fibers of $\pi$ and the Cohen-Macaulay-ness of $W$ ensures that each $l_i$ is not a zero divisor on $W\cap Y_1(t)\cap\cdots\cap Y_{i-1}(t)$ for all $t$.
\end{proof}
We return to the proof of Proposition \ref{wrd local}. In Lemma \ref{intersection family} let $W=W_\Lambda$ and $l_i=g_i+th_i$. It follows that (set $t=0$)
$$i(W_\Lambda\cdot \prod_{i=1}^{\rho} (\Theta_C^i)^\man;\mcU_\Delta)=i(W_\Lambda\cdot \prod_{i=1}^{\rho} V(f_i);\mcU_\Delta)= i(W_\Lambda\cdot \prod_{i=1}^{\rho} V(g_i);\mcU_\Delta)=m_\Lambda\cdot i( \prod_{i=1}^{\rho} V(g_i);\wmcU_\Lambda)$$
where the last equation is the projection formula in \cite[Proposition 2.10]{gubler1998local} and $m_\Lambda$ is the degree of $\pi_\Lambda$. By Theorem \ref{rabinoff intersection} we have $$i( \prod_{i=1}^{\rho} V(g_i);\wmcU_\Lambda)=i( \prod_{i=1}^{\rho}\mtrop( V(g_i));\Lambda)=1,$$ therefore $i(W_\Lambda\cdot \prod_{i=1}^{\rho} (\Theta_C^i)^\man;\mcU_\Delta)=m_\Lambda$.

Now for all $w_j\in W^\lambda(\G)\cap(\cap_i \Theta_\G^i)$ where $1\leq j\leq m$ we pick a polytope $\Lambda_j$ as above and get a degree $m_{\Lambda_i}$ of the corresponding projection map, which yields 
$$\sum_{j=1}^{m}m_{\Lambda_j}= i(W^\lambda(C)\cdot \Theta_C^1\cdots \Theta_C^\rho; \mpic^0(C))=m.$$
Note that the first equality follows from the fact that the $K$-dimension of the local ring of $W^\lambda (C)\cap(\cap_i \Theta^i_C)$ at a point is equal to that of $W^\lambda (C)^\man\cap(\cap_i (\Theta^i_C)^\man))$. Hence we must have $m_{\Lambda_j}=1$ for all $j$. Therefore $\pi_\Lambda$ is of degree one.

It follows that $W_\Lambda$ is irreducible and generically reduced. However, $W_\Lambda$ is Cohen-Macaulay since $W^\lambda(C)$ is, so it is everywhere reduced, hence integral. Now $\pi_\Lambda$ induces a finite morphism of degree one between integral domains whose source $K\langle \wmcU_\Lambda\rangle$ is normal, it must be an isomorphism.
\end{proof}
\begin{rem}\label{local property remark} 
An algebraic analogue of Proposition \ref{wrd local} is that a reduced (or Cohen-macaulay) closed subscheme $Z$ of $T_N$ is local analytically isomorphic to a torus at a point that tropicalize to the relative interior of a maximal face of $\mtrop(Z)$ of multiplicity one, see for example \cite[Lemma 6.2]{he2016generalization}.
\end{rem} 


\section{Lifting tropical intersections in a polyhedral domain.}
Let $N$ be an arbitrary lattice of rank $n$ as in Theorem \ref{lifting polyhedral domain} and $T_N$ the induced torus. Let $\Delta$ be a polytope of maximal dimension in $N_\mR$. In this section we use Osserman and Rabinoff's continuity theorem \cite[\S 5]{osserman2011lifting} of analytic intersection numbers to prove Theorem \ref{lifting polyhedral domain}. Let $\mcU_0\subset T_N^\man$ be the preimage of the origin in $N_\mR$. Then $\mcU_0$ acts on $\mcU_\Delta$. Denote the action by $\mu\colon \mcU_0\times \mcU_\Delta\rightarrow \mcU_\Delta$ and let $\pi\colon \mcU_0\times\mcU_\Delta\rightarrow\mcU_0$ be the projection. This gives an isomorphism:
$$(\pi,\mu)\colon \mcU_0\times \mcU_\Delta\rightarrow  \mcU_0\times \mcU_\Delta.$$

The following lemma is a consequence of \cite[Proposition 5.8]{osserman2011lifting}:

\begin{lem}\label{analytic continuity} 
Let $\pi$ be as above. Let $\mathcal Y,\mathcal Y'\subset \mcU_0\times\mcU_\Delta$ be Zariski-closed subspaces, flat over $\mcU_0$, such that $\mathcal Y\cap\mathcal Y'$ is finite over $\mcU_0$. Then the map 
$$s\mapsto i(\mathcal Y_s\cdot\mathcal Y_s';\mcU_\Delta)\quad\colon\quad |\mcU_0|\rightarrow \mZ$$ is constant on $\mcU_0$, where $\mathcal Y_s$ and $\mathcal Y_s'$ are fibers of $\pi$.
\end{lem}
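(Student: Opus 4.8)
The plan is to deduce Lemma~\ref{analytic continuity} from the analytic continuity theorem \cite[Proposition 5.8]{osserman2011lifting}, whose hypotheses require that the ambient family be nice enough (smooth or at least geometrically reduced fibers, properness of the intersection). The subtlety is that \cite[Proposition 5.8]{osserman2011lifting} is stated for families over a connected affinoid base in which the total space is, say, an open subdomain of a relative torus or affine space; here the base is $\mcU_0$ and the total space should be $\mcU_0\times\mcU_\Delta$, which is exactly such a family. So the first step is to record that $\mcU_0$ is connected (it is the polytopal domain over a single point, hence geometrically connected and even geometrically integral as a Laurent-polynomial affinoid) and that $\pi\colon\mcU_0\times\mcU_\Delta\to\mcU_0$ is flat with geometrically integral fibers, all canonically identified with $\mcU_\Delta$.

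The second step is to verify the finiteness/properness hypothesis. By assumption $\cY\cap\cY'$ is finite over $\mcU_0$; since both $\cY$ and $\cY'$ are Zariski-closed in the affinoid $\mcU_0\times\mcU_\Delta$ and finite over $\mcU_0$ forces the intersection to be contained in $(\mtrop\circ\pi')^{-1}(\Delta^\circ)$ after possibly shrinking — but here we do not even need the interior condition, since finiteness over the base already gives that $\cY\cap\cY'\to\mcU_0$ is finite, in particular proper and boundaryless, by the same boundary-chasing argument used in the proof of Lemma~\ref{intersection family}. Thus $\cY\cap\cY'$ is a finite flat-base-admissible family and the fiberwise intersection number $i(\cY_s\cdot\cY_s';\mcU_\Delta)$ — defined via the $\mathrm{Tor}$-Euler characteristic as in \cite[\S5]{osserman2011lifting} — is exactly the quantity to which \cite[Proposition 5.8]{osserman2011lifting} applies.

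The third step is the actual invocation: \cite[Proposition 5.8]{osserman2011lifting} asserts that, for a proper flat family of pairs of closed subspaces over a reduced connected affinoid base, with fiberwise-finite intersection, the intersection number is locally constant, hence (by connectedness of $\mcU_0$) constant. Applying this to $\cY,\cY'\subset\mcU_0\times\mcU_\Delta$ over $\mcU_0$ yields that $s\mapsto i(\cY_s\cdot\cY_s';\mcU_\Delta)$ is constant on $|\mcU_0|$, which is the claim. I would state this invocation carefully, matching the hypotheses of loc.~cit.\ one by one: base reduced and connected (done in step one), total space admitting a closed immersion into a relative analytic torus (it is literally $\mcU_0\times\mcU_\Delta\hookrightarrow T_N^\man\times T_N^\man$, or one can work inside $\mcU_0\times\mcU_\Delta$ directly since that is where the intersection multiplicity in Definition~\ref{polytopal intersection multiplicity} is computed), and the intersection finite over the base.

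I expect the main obstacle to be a bookkeeping one rather than a conceptual one: making sure the intersection multiplicity $i(\xi,\cX\cdot\cX';\mcU_\Delta)$ as defined in \S2.3 via $\mathrm{Tor}$'s over $\mathscr{O}_{\mcU_\Delta,\xi}$ agrees, fiberwise, with the quantity that \cite[Proposition 5.8]{osserman2011lifting} proves to be continuous — this is essentially the compatibility of the derived-tensor-product multiplicity with base change to a fiber, which holds because $\pi$ is flat so that $\mathscr{O}_{\cY_s,\xi}=\mathscr{O}_{\cY,\xi}\otimes_{\mathscr{O}_{\mcU_0,s}}\kappa(s)$ and similarly for $\cY'$, and the hypertor spectral sequence degenerates appropriately. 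Once that identification is in place, constancy is immediate from loc.~cit.; no further estimates are needed.
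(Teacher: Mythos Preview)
Your proposal is correct and follows exactly the paper's approach: the paper does not give a separate proof of this lemma but simply states it as ``a consequence of \cite[Proposition 5.8]{osserman2011lifting}'' and moves on. Your write-up is in fact more detailed than the paper's own treatment, since you spell out the hypothesis-checking (connectedness of $\mcU_0$, flatness of $\pi$, finiteness/properness of $\cY\cap\cY'$ over the base, and the $\mathrm{Tor}$-compatibility) that the paper leaves implicit.
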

 
We refer to \cite{ducros2011flatness} or \cite[\S 5]{osserman2011lifting} about the notion of flatness for analytic spaces, which is preserved under composition and change of base. Any analytic space is flat over $K$. We now prove Theorem 1.2:

\begin{proof}[Proof of Theorem \ref{lifting polyhedral domain}] Let $\mathcal X,\mathcal X'\subset \mcU_\Delta$ be Zariski closed analytic subspaces of $\mcU_\Delta$ of pure dimension. We first assume that $\dim (\mathcal X)+\dim(\mathcal X')=n$, hence $\mathcal X\cap\mathcal X'$ is finite. As the statement is local, we may also assume that $\mtrop(\mathcal X)\cap\mtrop(\mathcal X')$ contains only one point $w$ that lies in $\Delta^\circ$. It suffices to show that $\mathcal X\cap\mathcal X'$ is nonempty.

In Lemma \ref{analytic continuity}, let $\mathcal Y=(\pi,\mu)( \mcU_0\times \mathcal X)$ and $\mathcal Y'= \mcU_0\times \mathcal X'$ where $\pi$ and $\mu$ are as above. Then both $\mathcal Y$ and $\mathcal Y'$ are flat over $\mcU_0$. On the other hand, the argument in Lemma \ref{intersection family} shows that $\mathcal Y\cap\mathcal Y'$ is finite over $\mcU_0$, as $\mtrop(\mathcal Y_s)\cap\mtrop(\mathcal Y_s')=\mtrop(\mathcal X)\cap\mtrop(\mathcal X')$ is finite for every $s\in |\mcU_0|$. Therefore $i(\mathcal Y_s\cdot\mathcal Y_s';\mcU_\Delta)$ is constant on $|\mcU_0|$ by Lemma \ref{analytic continuity}. Take $\xi\in|\mathcal X|$ and $\xi'\in|\mathcal X'|$ such that $\mtrop(\xi)=\mtrop(\xi')=w$. Take also  
 $t\in |\mcU_0|$ such that $t(\xi)=\xi'$. Then $\mathcal Y_t\cap\mathcal Y'_t=t(\mathcal X)\cap \mathcal X'$ contains $\xi'$, hence $i(\mathcal Y_t\cdot\mathcal Y_t';\mcU_\Delta)>0$. Thus $i(\mathcal Y_s\cdot\mathcal Y_s';\mcU_\Delta)>0$ for all $s\in |\mcU_0|$. Taking $s$ to be the identity in $\mcU_0$ implies that $\mathcal X\cap\mathcal X'=\mathcal Y_s\cap\mathcal Y_s'$ is nonempty. Thus $w\in \mtrop(\mathcal X)\cap\mtrop(\mathcal X')$

This situation easily generalizes to intersections of three or more subschemes as in \cite[\S 5.2]{osserman2013lifting}. Namely, we have the following lemma:

\begin{lem}\label{multiple finte intersection}
Let $\mathcal X_1,...,\mathcal X_m$ be Zariski-closed subspaces of $\mcU_\Delta$ of (pure) codimension $d_1,...,d_m$ respectively, where $d_1+\cdots +d_m=n$. Suppose $\mtrop(\mathcal X_1)\cap\cdots\cap\mtrop(\mathcal X_m)$ is finite and contained in $\Delta^\circ$. Then 
$$\mtrop(\mathcal X_1)\cap\cdots\cap\mtrop(\mathcal X_m)=\mtrop(\mathcal X_1\cap\cdots\cap\mathcal X_m).$$

\end{lem}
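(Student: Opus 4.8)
The plan is to reduce Lemma~\ref{multiple finte intersection} to the two-factor case already established in the proof of Theorem~\ref{lifting polyhedral domain} by an induction on $m$, following \cite[\S 5.2]{osserman2013lifting}. The case $m=2$ is exactly what was shown above. For the inductive step, suppose the statement holds for $m-1$ factors, and set $\mathcal X=\mathcal X_1$ (of codimension $d_1$) and let $\mathcal X'=\mathcal X_2\cap\cdots\cap\mathcal X_m$. The obvious difficulty is that $\mathcal X'$ is not \emph{a priori} of pure codimension $d_2+\cdots+d_m=n-d_1$, and it need not even be Zariski-closed of the expected dimension everywhere — only its tropicalization is controlled. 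So the first thing I would do is localize: since the assertion is local on $\Delta$, fix a point $w\in\mtrop(\mathcal X_1)\cap\cdots\cap\mtrop(\mathcal X_m)\subset\Delta^\circ$, and after shrinking $\Delta$ to a small polytope around $w$ (keeping $w$ in the interior) we may assume that $w$ is the only point of the intersection of all the tropicalizations and that each $\mtrop(\mathcal X_i)$ is a (finite union of) polyhedra through $w$.

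Next I would replace the naive intersection $\mathcal X_2\cap\cdots\cap\mathcal X_m$ by the component(s) meeting the fiber over $w$. Concretely, apply the translation trick from the $m=2$ argument: pick points $\xi_i\in|\mathcal X_i|$ with $\mtrop(\xi_i)=w$ and elements $t_i\in|\mcU_0|$ so that the translates $t_i(\mathcal X_i)$ all pass through a common point lying over $w$; by Lemma~\ref{analytic continuity} applied iteratively (pushing the family structure over $\mcU_0^{\times(m-1)}$, or one factor at a time) the relevant intersection number is constant and positive, which forces $\mathcal X_1\cap\cdots\cap\mathcal X_m\neq\emptyset$ near the fiber over $w$, giving $w\in\mtrop(\mathcal X_1\cap\cdots\cap\mathcal X_m)$. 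The reverse inclusion $\mtrop(\mathcal X_1\cap\cdots\cap\mathcal X_m)\subset\bigcap_i\mtrop(\mathcal X_i)$ is immediate since $\mtrop$ is functorial for closed immersions. Alternatively — and this is the cleaner route — one runs the induction: by the $(m-1)$-case there is a Zariski-closed subspace $\mathcal X'$ of $\mcU_\Delta$, pure of codimension $n-d_1$, that is an irreducible (or Cohen--Macaulay) local model for $\mathcal X_2\cap\cdots\cap\mathcal X_m$ near the fiber over $w$ whose tropicalization is $\bigcap_{i\geq 2}\mtrop(\mathcal X_i)$ locally at $w$, and then the $m=2$ result applied to $\mathcal X_1$ and $\mathcal X'$ finishes it.

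The main obstacle is exactly the bookkeeping in that last reduction: making sure that when we peel off $\mathcal X_1$ the remaining intersection $\mathcal X_2\cap\cdots\cap\mathcal X_m$ can be taken of pure codimension $n-d_1$ with tropicalization equal to the intersection of the individual tropicalizations \emph{in a neighborhood of} $w$. Properness of the tropical intersection (the hypothesis that $\bigcap_i\mtrop(\mathcal X_i)$ is finite) is what guarantees that at each stage the partial intersections meet in the expected dimension, so no excess components can appear over the relevant fiber; one should state this as a sub-claim and verify it using that $\mtrop$ is a morphism of polyhedral sets that cannot decrease codimension under intersection together with the dimension equality from the finite-intersection hypothesis. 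Once that local purity is in hand, the flatness and finiteness arguments of Lemma~\ref{intersection family} and the continuity of Lemma~\ref{analytic continuity} apply verbatim, and the induction closes.
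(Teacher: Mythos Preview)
Your first approach --- translating each $\mathcal X_i$ by an independent element of $\mcU_0$ to force a common point over $w$, then invoking continuity of the intersection number over $\mcU_0^{\,m-1}$ --- is the correct idea and is what the paper has in mind when it simply cites \cite[\S 5.2]{osserman2013lifting}. The only loose end is that Lemma~\ref{analytic continuity} is stated for two subspaces, and ``applying it iteratively, one factor at a time'' is not literally a thing: the intersection number you want involves all $m$ factors at once. The standard fix (and this is what \cite{osserman2013lifting} does) is the diagonal trick: inside $\mcU_{\Delta^m}\subset (T_N^m)^{\man}$ set $\mathcal Y=\mathcal X_1\times\cdots\times\mathcal X_m$ and let $\mathcal Y'$ be the diagonal copy of $\mcU_\Delta$. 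Then $\mathcal Y\cap\mathcal Y'\cong\bigcap_i\mathcal X_i$, while $\mtrop(\mathcal Y)\cap\mtrop(\mathcal Y')=\{(w,\ldots,w):w\in\bigcap_i\mtrop(\mathcal X_i)\}$ is finite and lies in $(\Delta^m)^\circ$. The codimensions are complementary in $N_\mR^m$, so the $m=2$ case already established applies verbatim and gives the result without any induction.

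Your ``cleaner route'' by induction, on the other hand, does not close. The inductive hypothesis is the lemma itself, which requires the codimensions to sum to $n$; for the partial intersection $\mathcal X_2,\ldots,\mathcal X_m$ they sum only to $n-d_1$, so the $(m-1)$-case tells you nothing about $\mtrop(\mathcal X_2\cap\cdots\cap\mathcal X_m)$. More seriously, the hypothesis that $\bigcap_{i=1}^m\mtrop(\mathcal X_i)$ is finite does \emph{not} force $\bigcap_{i\geq 2}\mtrop(\mathcal X_i)$ to have the expected dimension $d_1$ near $w$: the partial tropical intersection can be improper even when the full one is proper. Hence there is no reason to expect $\mathcal X_2\cap\cdots\cap\mathcal X_m$ to be pure of codimension $n-d_1$, and the reduction to the two-factor case via this route collapses. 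Drop this branch and use the diagonal argument above.
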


Now suppose $\mtrop(\mathcal X)\cap\mtrop(\mathcal X')$ has dimension $l>0$. For any $G$-rational point $v\in \mtrop(\mathcal X)\cap \mtrop(\mathcal X')\cap \Delta^\circ$ we can find a Zariski-closed subspace $\mathcal Z$ of $\mcU_\Delta$ of codimension $l$ such that $\mtrop(\mathcal Z)$ contains $v$ and intersect properly with $\mtrop(\mathcal X)\cap\mtrop(\mathcal X')$ near $v$. Hence Lemma \ref{multiple finte intersection} implies that $$v\in\mtrop(\mathcal Z\cap \mathcal X\cap\mathcal X')\subset \mtrop(\mathcal X\cap\mathcal X').$$
As $G$-rational points are dense in $\mtrop(\mathcal X)\cap\mtrop(\mathcal X')$ this implies that $\mtrop(\mathcal X)\cap\mtrop(\mathcal X')\cap\Delta^\circ=\mtrop(\mathcal X\cap\mathcal X')\cap\Delta^\circ.$
\end{proof}

As mentioned in the proof above, we can also state Theorem \ref{lifting polyhedral domain} for the intersection of more than two analytic subspaces of $\mcU_\Delta$:

\begin{cor}\label{multiple intersection}
Let $\mathcal X_1,...,\mathcal X_m$ be Zariski-closed subspaces of $\mcU_\Delta$ of pure dimensions whose tropicalizations intersect properly. Then 
$$\mtrop(\mathcal X_1)\cap\cdots\cap\mtrop(\mathcal X_m)\cap\Delta^\circ=\mtrop(\mathcal X_1\cap\cdots\cap\mathcal X_m)\cap\Delta^\circ.$$
\end{cor}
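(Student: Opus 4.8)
The plan is to deduce Corollary~\ref{multiple intersection} from Theorem~\ref{lifting polyhedral domain} by an induction on $m$, exactly mirroring the passage from the binary case to Lemma~\ref{multiple finte intersection} that appears inside the proof of Theorem~\ref{lifting polyhedral domain}. The base case $m=2$ is Theorem~\ref{lifting polyhedral domain} itself. For the inductive step, suppose the statement holds for $m-1$ closed subspaces. Set $\mathcal X=\mathcal X_1\cap\cdots\cap\mathcal X_{m-1}$ and $\mathcal X'=\mathcal X_m$. The properness hypothesis on $\mtrop(\mathcal X_1),\dots,\mtrop(\mathcal X_m)$ forces, in particular, that $\mtrop(\mathcal X_1),\dots,\mtrop(\mathcal X_{m-1})$ intersect properly, so by the inductive hypothesis $\mtrop(\mathcal X)\cap\Delta^\circ=\mtrop(\mathcal X_1)\cap\cdots\cap\mtrop(\mathcal X_{m-1})\cap\Delta^\circ$.

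The subtlety is that $\mathcal X\cap\Delta^\circ$ need not be pure-dimensional even if each $\mathcal X_i$ is, and Theorem~\ref{lifting polyhedral domain} as stated asks for pure dimension; also $\mtrop(\mathcal X)$ might differ from $\mtrop(\mathcal X_1)\cap\cdots\cap\mtrop(\mathcal X_{m-1})$ outside $\Delta^\circ$. Since the assertion is local on $\Delta^\circ$, I would fix a $G$-rational point $v$ in the intersection $\bigcap_i\mtrop(\mathcal X_i)\cap\Delta^\circ$ lying in the relative interior of a maximal face, pass to a small polytope $\Delta_v\subset\Delta^\circ$ containing $v$ on which $\mathcal X$ is cut out cleanly, and replace $\mathcal X$ by the union of those irreducible components of $\mathcal X$ through points tropicalizing near $v$; restricting to these components makes $\mathcal X$ equidimensional of the expected codimension $d_1+\cdots+d_{m-1}$ near $v$ by the inductive hypothesis and the properness assumption, and does not change $\mtrop(\mathcal X)\cap\Delta_v$. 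Then $\mtrop(\mathcal X)$ and $\mtrop(\mathcal X_m)$ intersect properly near $v$ inside $\Delta_v$, so Theorem~\ref{lifting polyhedral domain} applies and gives $v\in\mtrop(\mathcal X\cap\mathcal X_m)=\mtrop(\mathcal X_1\cap\cdots\cap\mathcal X_m)$ locally. Since such $v$ are dense in $\bigcap_i\mtrop(\mathcal X_i)\cap\Delta^\circ$ and the reverse inclusion $\mtrop(\mathcal X_1\cap\cdots\cap\mathcal X_m)\subset\bigcap_i\mtrop(\mathcal X_i)$ is automatic, the two sets agree on $\Delta^\circ$.

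I expect the main obstacle to be the bookkeeping around equidimensionality and component selection: verifying that discarding components of $\mathcal X$ that do not meet a neighbourhood of $v$ is harmless for the tropical intersection, and that the remaining components really do have the expected codimension so that "intersecting properly" is available as input to Theorem~\ref{lifting polyhedral domain}. This is precisely the kind of reduction already carried out in the proof of Theorem~\ref{lifting polyhedral domain} when deriving Lemma~\ref{multiple finte intersection}, so I would cite that argument rather than repeat it, noting only that the induction on $m$ goes through verbatim with $\mathcal X_1\cap\cdots\cap\mathcal X_{m-1}$ in place of a single $\mathcal X_1$. Once the local statement at every dense $G$-rational $v$ is in hand, the global equality over $\Delta^\circ$ follows by density exactly as at the end of the proof of Theorem~\ref{lifting polyhedral domain}.
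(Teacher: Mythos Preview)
Your inductive route is genuinely different from the paper's, and it has a real gap at the very first step.

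\textbf{What the paper does.} The paper does not group $\mathcal X_1,\dots,\mathcal X_{m-1}$ and apply the binary Theorem~\ref{lifting polyhedral domain}. Instead it proves Lemma~\ref{multiple finte intersection} for the zero--dimensional case with $m$ subspaces by running the moving/continuity argument simultaneously for all $m$ (each $\mathcal X_i$ gets its own translation parameter, as in \cite[\S 5.2]{osserman2013lifting}), and then, for positive--dimensional intersections, cuts down with an auxiliary $\mathcal Z$ of codimension $l=\dim\bigl(\bigcap_i\mtrop(\mathcal X_i)\bigr)$ and applies Lemma~\ref{multiple finte intersection} to $\mathcal X_1,\dots,\mathcal X_m,\mathcal Z$. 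No partial intersection $\mathcal X_1\cap\cdots\cap\mathcal X_{m-1}$ is ever formed, so no claim about its equidimensionality or about proper intersection of a sub-collection of the $\mtrop(\mathcal X_i)$ is needed.

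\textbf{Where your argument breaks.} You assert that proper intersection of $\mtrop(\mathcal X_1),\dots,\mtrop(\mathcal X_m)$ forces proper intersection of $\mtrop(\mathcal X_1),\dots,\mtrop(\mathcal X_{m-1})$. This is the step that would let you invoke the inductive hypothesis and identify $\mtrop(\mathcal X)$ with $\bigcap_{i<m}\mtrop(\mathcal X_i)$ on $\Delta^\circ$. But this implication requires a lower bound of the form $\dim\bigl(C\cap\mtrop(\mathcal X_m)\bigr)\ge\dim C-d_m$ for every component $C$ of $\bigcap_{i<m}\mtrop(\mathcal X_i)$, and such a Serre-type inequality is \emph{not} available for set-theoretic intersections of polyhedral sets in a polytope; even for balanced complexes it can fail once one of the inputs (here $C$) is itself an intersection and hence need not be balanced. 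Without it, the inductive hypothesis cannot be invoked, and you have no control over $\mtrop(\mathcal X)$ near $v$. Your fallback of citing ``the reduction already carried out \dots\ when deriving Lemma~\ref{multiple finte intersection}'' does not rescue this: that derivation proceeds by extending the translation family to $m$ factors, not by grouping $m-1$ of the $\mathcal X_i$ together, so it supplies no argument for the claim you need.

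If you want to keep the two-subspace theorem as the only analytic input, the clean fix is the diagonal trick (replace $\mathcal X_1,\dots,\mathcal X_m$ in $\mathcal U_\Delta$ by $\mathcal X_1\times\cdots\times\mathcal X_m$ and the diagonal in $\mathcal U_\Delta^m$), which reduces to $m=2$ without any hereditary properness claim; otherwise, follow the paper and apply Lemma~\ref{multiple finte intersection} directly with an auxiliary $\mathcal Z$.
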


\begin{rem}\label{polyhedral complex}
As the statement in Corollary \ref{multiple intersection} is local, it is still true if we replace $\Delta$ by the support of a polyhedral complex with integral $G$-affine faces. In particular, we have 
$$\mtrop(\mathcal X_1)\cap\cdots\cap\mtrop(\mathcal X_m)=\mtrop(\mathcal X_1\cap\cdots\cap\mathcal X_m)$$
if $\mathcal X_1,...,\mathcal X_m$ are Zariski-closed analytic subspaces of $T_N^\man$ with proper tropical intersections. 
\end{rem}

It is necessary to only consider the interior of $\Delta$ in Theorem \ref{lifting polyhedral domain} or Corollary \ref{multiple intersection}. See the example below.

\begin{ex}\label{interior points}

$$\begin{tikzpicture}
\fill[fill=lightgray] (0,0)--(-0,1)--(1,1)--(1,0);
\draw (0,0)--(-0,1)--(1,1)--(1,0);

\draw[red] (0,0)--(0,2);
\draw[red] (0,0)--(2,0);
\draw[red] (0,0.05)--(-1.5,-1.45);

\draw[blue] (-1.5,-1.5)--(2,2);

\draw (0,0) node[circle, fill=black, scale=0.3, label=left:{$O$}]{};
\draw (0.5,-0.05) node[circle, fill=black, scale=0, label=above:{$\Delta$}]{};
\draw (0,2) node[circle, fill=black, scale=0, label=above:{$\mtrop(\mathcal X)$}]{};
\draw (2.5,2) node[circle, fill=black, scale=0, label=above:{$\mtrop(\mathcal X')$}]{};

\end{tikzpicture}
$$
Let $\mathcal X$ and $\mathcal X'$ be two curves in $(K^*)^2$ defined by $x+y+1=0$ and $x+y=0$ respectively. Let $\Delta=[0,1]^2$. Then $\mtrop(\mathcal X)\cap\mtrop(\mathcal X')\cap\Delta$ is the origin, hence $\mtrop(\mathcal X)$ and $\mtrop(\mathcal X')$ intersect properly in $\Delta$. But $\mathcal X\cap\mathcal X'\cap\mcU_\Delta$ is empty.

\end{ex}

\section{Lifting divisors with imposed ramification}
In this section we prove Theorem \ref{introduction}. We will use the notations in Notation \ref{notation}. Let $\alpha=(\alpha_0,...,\alpha_r)$ be a Schubert index of type $(d,r)$. Let $\lambda=(\lambda_0,...,\lambda_r)$ be the induced partition. Hence $\lambda_i=g-d+r+\alpha_{r-i}$.

$$
\begin{tikzpicture}[scale=0.6]

\draw (0,2)--(0,5);
\draw (1,2)--(1,5);
\draw (2,2)--(2,5);
\draw (3,2)--(3,5);
\draw (4,2)--(4,5);
\draw (5,2)--(5,5);

\draw (5.3,0)--(5.3,1);
\draw (5.3,2)--(5.3,5);

\draw (6.3,2)--(6.3,4);
\draw(0,0)--(5,0);
\draw(0,1)--(5,1);
\draw(0,2)--(5,2);
\draw(0,3)--(5,3);
\draw(0,4)--(5,4);\draw (5.3,4)--(6.3,4);
\draw(0,5)--(5,5);

\draw (7.3,2)--(7.3,3);
\draw (8.3,2)--(8.3,3);
\draw (9.3,0)--(9.3,1);
\draw (5.3,0)--(9.3,0);
\draw (5.3,1)--(9.3,1);
\draw (5.3,2)--(8.3,2);
\draw (5.3,3)--(8.3,3);

\draw (0,0)--(0,1);
\draw (1,0)--(1,1);
\draw (2,0)--(2,1);
\draw (3,0)--(3,1);
\draw (4,0)--(4,1);
\draw (5,0)--(5,1);
\draw (6.3,0)--(6.3,1);
\draw (7.3,0)--(7.3,1);
\draw (8.3,0)--(8.3,1);
\draw (3,1.7) node[circle, fill=black, scale=0., label=right:{$\vdots$}]{};

\draw[decorate,decoration={brace,amplitude=3pt},thick] 
    (-0.2,0) node(t_k_unten){} -- 
    (-0.2,5) node(t_k_opt_unten){}; 
\draw (-0.3,2.5) node[circle, fill=black, scale=0., label=left:{$r+1$}]{};

\draw[decorate,decoration={brace,amplitude=3pt,mirror},thick] 
    (0,-0.2) node(t_k_unten){} -- 
    (5,-0.2) node(t_k_opt_unten){}; 
\draw (2.5,-0.2) node[circle, fill=black, scale=0., label=below:{$g-d+r$}]{};

\draw (5.15,4.5) node[circle, fill=black, scale=0., label=right:{$\alpha_0$}]{};
\draw (6.15,3.5) node[circle, fill=black, scale=0., label=right:{$\alpha_1$}]{};
\draw (8.15,2.5) node[circle, fill=black, scale=0., label=right:{$\alpha_2$}]{};
\draw (9.15,0.5) node[circle, fill=black, scale=0., label=right:{$\alpha_r$}]{};

\end{tikzpicture}
$$
After translating every divisor class on $C$ (resp. $\G$) of degree $d$ to its image in $\mpic^0(C)$ (resp. $\mpic^0(\G)$) under the Abel-Jacobi map induced by $d\mcP$ (resp $dP$) we may assume that the ramification is imposed by $\lambda$ (instead of $\alpha$). Hence it remains to prove the following:

\begin{thm}\label{lifting ramification}
We have $\mtrop(W^\lambda(C))=W^\lambda(\G)$.
\end{thm}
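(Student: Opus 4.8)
The plan is to carry out the inductive scheme sketched in Section~1, working throughout in the partition normalization. For $0\le j\le r$ let $\lambda^j$ be the partition induced by $\alpha^j=(\alpha_0,\dots,\alpha_j,\alpha_j,\dots,\alpha_j)$, and let $\nu^j,\mu^j$ be the rectangular partitions cut out by $X_j,Y_{j-1}$, so $W_j(C)=W^{\lambda^j}(C)$, $X_j(C)=W^{\nu^j}(C)$, $Y_{j-1}(C)=W^{\mu^j}(C)$, and likewise over $\G$. Here $W_r=W^\lambda$ is what we want, $\lambda^0$ is rectangular, and $\mtrop(W^\lambda(C))\subseteq W^\lambda(\G)$ is already known (Theorem~\ref{summary}), so only the reverse inclusion is at issue; the case $j=0$ is immediate from Theorem~\ref{summary}. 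An inclusion--exclusion computation with the Young diagrams gives $\lambda^{j-1}\cup\nu^j=\lambda^j$ and $\lambda^{j-1}\cap\nu^j=\mu^j$, whence $W_j=W_{j-1}\cap X_j$ (over $C$ and over $\G$), $W_{j-1},X_j\subseteq Y_{j-1}$, and $\dim W_j(\G)=\dim W_{j-1}(\G)+\dim X_j(\G)-\dim Y_{j-1}(\G)$ --- that is, $W_{j-1}(\G)$ and $X_j(\G)$ meet properly inside $Y_{j-1}(\G)$. Since $\nu^j,\mu^j$ are rectangular we also have $\mtrop(X_j(C))=X_j(\G)$ and $\mtrop(Y_{j-1}(C))=Y_{j-1}(\G)$ (Theorem~\ref{summary}), and all these loci are pure-dimensional (Theorem~\ref{summary}, Proposition~\ref{dimension}).

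For the inductive step, assume $\mtrop(W_{j-1}(C))=W_{j-1}(\G)$; it then remains to prove $\mtrop(W_j(C))\supseteq W_j(\G)$. As $\mtrop(W_j(C))$ is closed, is contained in $W_j(\G)$, and both are pure of dimension $g-|\lambda^j|$, it suffices to produce a dense subset of $W_j(\G)$ inside $\mtrop(W_j(C))$; I will take the points $v$ lying in the relative interior of a maximal face $\delta$ of $Y_{j-1}(\G)$. For such $v$, Proposition~\ref{wrd local} applied to $Y_{j-1}(C)$ (the proposition holds for any $W^{r'}_{d'}(C)$) provides a polytope $\Lambda\ni v$ with $\Lambda\subset\mathrm{relint}(\delta)$, which after thickening to a full-dimensional $\Delta$ that maps isomorphically into $\mpic^0(\G)$ yields an isomorphism $Y_{j-1}(C)^\man\cap\mcU_\Lambda\xrightarrow{\ \sim\ }\wmcU_\Lambda$ onto a polytopal domain, compatibly with tropicalization. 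Pushing $W_{j-1}(C)^\man\cap\mcU_\Lambda$ and $X_j(C)^\man\cap\mcU_\Lambda$ across this isomorphism gives pure-dimensional Zariski-closed subspaces $\mcX,\mcX'$ of $\wmcU_\Lambda$ whose tropicalizations, near the image of $v$, are the projections of $W_{j-1}(\G)\cap\delta$ and $X_j(\G)\cap\delta$; these meet properly, because $W_{j-1}(\G)$ and $X_j(\G)$ do so in $Y_{j-1}(\G)\supseteq\delta$ and all the loci are pure-dimensional. Theorem~\ref{lifting polyhedral domain} then puts the image of $v$ in $\mtrop(\mcX\cap\mcX')$, and transporting back through the isomorphism yields $v\in\mtrop(W_{j-1}(C)\cap X_j(C))=\mtrop(W_j(C))$. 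Passing to closures and combining with the known inclusion completes the induction; the case $j=r$ is Theorem~\ref{lifting ramification}.

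The main obstacle --- essentially the only substantive step once Sections~3 and 4 are available --- is the density statement just used: that the generic point of each maximal face of $W_j(\G)$ lies in the relative interior of a maximal face of $Y_{j-1}(\G)$, equivalently that no maximal face of $W_j(\G)$ is entirely contained in the locus where distinct maximal faces of $Y_{j-1}(\G)$ overlap (there Proposition~\ref{wrd local} provides no polytopal model). I would extract this from Pflueger's description: writing $W_j(\G)=\bigcup_{t\vdash_{\underline m}\lambda^j}\mT(t)$ and $Y_{j-1}(\G)=\bigcup_{s\vdash_{\underline m}\mu^j}\mT(s)$, one has $\mu^j\subseteq\lambda^j$ and $\mT(t)\subseteq\mT(t|_{\mu^j})$, and $\mT(t|_{\mu^j})$ is a maximal face of $Y_{j-1}(\G)$ since displacement tableaux on a generic chain are injective (cf.\ the remark after Definition~\ref{displacement tableaux}). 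What remains is to check that $\mT(t)$ is contained in no other $\mT(s)$ with $s\vdash_{\underline m}\mu^j$; I expect this to follow from injectivity together with the genericity bound ($m_i>2g-2$ or $m_i=0$), which should force any such $s$ to agree with $t|_{\mu^j}$ on each diagonal and hence everywhere. The remaining items --- the inclusion--exclusion identities, the expected-dimension equality, and the basic facts that $\mtrop(W_j(C))$ is closed and pure-dimensional --- are routine.
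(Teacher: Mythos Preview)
Your proposal is correct and follows essentially the same route as the paper. The inductive scheme, the use of Proposition~\ref{wrd local} to replace $Y_{j-1}(C)^{\man}$ locally by a polytopal domain, and the application of Theorem~\ref{lifting polyhedral domain} are exactly what the paper does. The ``main obstacle'' you single out---that each maximal torus $\mT(t)\subset W_j(\G)$ lies in a \emph{unique} maximal torus $\mT(s)\subset Y_{j-1}(\G)$, namely $s=t|_{\mu^j}$---is precisely the content of the paper's Lemma~\ref{intersect interior}, and your sketched diagonal argument is the one the paper uses: genericity forces $x-y=x'-y'$ whenever $t(x,y)=s(x',y')$, and one then runs an induction along diagonals $S_k$, anchored at the diagonal $S_{k_j}$ on which $\mu^j$ and $\lambda^j$ have identical boxes, to conclude $s=t|_{\mu^j}$.
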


Let $\lambda_j$ be the partition corresponding to the $(r+1-j)\times(g-d+r+\alpha_j)$ diagram. Then $W^{\lambda_j}(C)$ is isomorphic to $W^{r-j}_{d-\alpha_j-j}(C)$, and $W^\lambda(C)=\bigcap_{0\leq i\leq r}W^{\lambda_i}(C)$, while $W^{\lambda_j}(\G)$ is isomorphic to $W^{r-j}_{d-\alpha_j-j}(\G)$, and $W^\lambda(\G)=\bigcap_{0\leq i\leq r}W^{\lambda_i}(\G)$. Let  $\lambda^j$ be the union of $\lambda_1,...,\lambda_j$ and $W_j(C)=\bigcap_{0\leq i\leq j}W^{\lambda_i}(C)=W^{\lambda^j}(C)$ and $W_j(\G)=\bigcap_{0\leq i\leq j}W^{\lambda_i}(\G)=W^{\lambda^j}(\G)$  for $0\leq j\leq r$. 

Let also $\mu_{j}$ be the partition corresponding to the $(r-j)\times(g-d+r+\alpha_j)$ diagram (this is the intersection of $\lambda^j$ and $\lambda_{j+1}$). As above we have $W^{\mu_{j}}(C)$ isomorphic to $W^{r-j-1}_{d-\alpha_j-j-1}(C)$, and $W^{\mu_{j}}(\G)$ isomorphic to $W^{r-j-1}_{d-\alpha_j-j-1}(G)$. Moreover, we have $W_j(C)\subset W^{\mu_j}(C)$ and $W^{\lambda_{j+1}}(C)\subset W^{\mu_j}(C)$ and $W_j(\G)\subset W^{\mu_j}(\G)$ and $W^{\lambda_{j+1}}(\G)\subset W^{\mu_j}(\G)$.

$$
\begin{tikzpicture}[scale=0.6]

\draw (0,2)--(0,5);
\draw (1,2)--(1,5);
\draw (2,2)--(2,5);
\draw (3,2)--(3,5);
\draw (4,2)--(4,5);
\draw (5,2)--(5,5);

\draw (6,2)--(6,4);
\draw(0,0)--(6,0);
\draw(0,1)--(6,1);
\draw(0,2)--(6,2);
\draw(0,3)--(6,3);
\draw(0,4)--(6,4);
\draw(0,5)--(5,5);

\draw (7,2)--(7,3);
\draw (8,2)--(8,3);
\draw (9,0)--(9,1);
\draw (6,0)--(9,0);
\draw (6,1)--(9,1);
\draw (6,2)--(8,2);
\draw (6,3)--(8,3);

\draw (0,0)--(0,1);
\draw (1,0)--(1,1);
\draw (2,0)--(2,1);
\draw (3,0)--(3,1);
\draw (4,0)--(4,1);
\draw (5,0)--(5,1);
\draw (6,0)--(6,1);
\draw (7,0)--(7,1);
\draw (8,0)--(8,1);
\draw (3,1.7) node[circle, fill=black, scale=0., label=right:{$\vdots$}]{};

\draw [red] (-0.3,-0.3)--(8.3,-0.3);
\draw [red] (-0.3,-0.3)--(-0.3,3.3);
\draw [red] (-0.3,3.3)--(8.3,3.3);
\draw [red] (8.3,-0.3)--(8.3,3.3);
\draw (8,3.2) node[circle, fill=black, scale=0., label=right:{$\lambda_{j+1}$}]{};

\draw [blue] (-0.2,-0.2)--(6.2,-0.2);
\draw [blue] (-0.2,-0.2)--(-0.2,3.2);
\draw [blue] (-0.2,3.2)--(6.2,3.2);
\draw [blue] (6.2,-0.2)--(6.2,3.2);
\draw (6,1.5) node[circle, fill=black, scale=0., label=right:{$\mu_{j}$}]{};

\draw [green] (-0.1,-0.1)--(6.1,-0.1);
\draw [green] (-0.1,-0.1)--(-0.1,5.1);
\draw [green] (-0.1,5.1)--(5.1,5.1);
\draw [green] (6.1,4.1)--(6.1,-0.1);
\draw [green] (6.1,4.1)--(5.1,4.1);
\draw [green] (5.1,5.1)--(5.1,4.1);
\draw (4.9,5.3) node[circle, fill=black, scale=0., label=right:{$\lambda^{j}$}]{};

\draw[decorate,decoration={brace,amplitude=3pt},thick] 
    (-0.5,3) node(t_k_unten){} -- 
    (-0.5,5) node(t_k_opt_unten){}; 
\draw (-0.5,4) node[circle, fill=black, scale=0., label=left:{$j+1$}]{};

\draw[decorate,decoration={brace,amplitude=3pt},thick] 
    (-0.5,0) node(t_k_unten){} -- 
    (-0.5,3) node(t_k_opt_unten){}; 
\draw (-0.5,1.5) node[circle, fill=black, scale=0., label=left:{$r-j$}]{};

\draw [dashed] (2,-1)--(8,5);
\draw (8,5) node[circle, fill=black, scale=0., label=right:{$S_{k_{j}}$}]{};

\draw[decorate,decoration={brace,amplitude=3pt,mirror},thick] 
    (0,-0.5) node(t_k_unten){} -- 
    (6,-0.5) node(t_k_opt_unten){}; 
\draw (3,-0.5) node[circle, fill=black, scale=0., label=below:{$g-d+r+\alpha_j$}]{};

\end{tikzpicture}
$$

In order to prove the Theorem above, we first show the following lemma:

\begin{lem}\label{intersect interior}
$W_j(\G)$ and $W^{\lambda_{j+1}}(\G)$ intersect properly in $W^{\mu_j}(\G)$, and there is an open dense subset $U_j$ of $W_j(\G)\cap W^{\lambda_{j+1}}(\G)=W_{j+1}(\Gamma)$ which is contained in $\mathrm{relint}(W^{\mu_j}(\G))$.
\end{lem}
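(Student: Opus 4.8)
The plan is to prove the two claims separately: the properness of the intersection is a dimension count, while the existence of $U_j$ reduces to a uniqueness statement about displacement tableaux, which is the real content.

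\emph{Properness.} Since $\G$ is generic, Proposition~\ref{dimension} gives that $W^{\mu_j}(\G)$, $W_j(\G)=W^{\lambda^j}(\G)$, $W^{\lambda_{j+1}}(\G)$ and $W_{j+1}(\G)=W^{\lambda^{j+1}}(\G)$ are of pure dimensions $g-|\mu_j|$, $g-|\lambda^j|$, $g-|\lambda_{j+1}|$ and $g-|\lambda^{j+1}|$. From Definition~\ref{bn locus partition graph}, $W_j(\G)\cap W^{\lambda_{j+1}}(\G)=W^{\lambda^j\cup\lambda_{j+1}}(\G)=W_{j+1}(\G)$. One checks directly from the shapes that $\lambda^j\cup\lambda_{j+1}=\lambda^{j+1}$ and $\lambda^j\cap\lambda_{j+1}=\mu_j$ (the latter using that $\alpha$ is non-decreasing), so inclusion--exclusion yields $|\lambda^{j+1}|+|\mu_j|=|\lambda^j|+|\lambda_{j+1}|$ and hence $\dim W_{j+1}(\G)=\dim W_j(\G)+\dim W^{\lambda_{j+1}}(\G)-\dim W^{\mu_j}(\G)$. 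As $W_{j+1}(\G)$ is pure-dimensional, this is precisely the assertion that $W_j(\G)$ and $W^{\lambda_{j+1}}(\G)$ intersect properly inside $W^{\mu_j}(\G)$.

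\emph{Reduction to a uniqueness statement.} Injectivity of displacement tableaux on a generic chain makes every cell $\mT(t)$, $t\vdash_{\underline m}\mu_j$, of dimension $g-|\mu_j|$, so (for a suitably chosen polyhedral structure) $\mathrm{relint}(W^{\mu_j}(\G))$ is the complement of $Z:=\bigcup_{t\neq t'}\mT(t)\cap\mT(t')$, a closed subset of dimension $<g-|\mu_j|$. Write $W_{j+1}(\G)=\bigcup_{u\vdash_{\underline m}\lambda^{j+1}}\mT(u)$. If we show that no cell $\mT(u)$ is contained in $Z$, then $U_j:=W_{j+1}(\G)\setminus Z$ is open in $W_{j+1}(\G)$, is dense in each $\mT(u)$ and hence in $W_{j+1}(\G)$, and lies in $\mathrm{relint}(W^{\mu_j}(\G))$, as required. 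By irreducibility of $\mT(u)$, the containment $\mT(u)\subset Z$ would force $\mT(u)\subset\mT(t)\cap\mT(t')$ for two distinct $t,t'\vdash_{\underline m}\mu_j$; so it suffices to prove that \emph{at most one} $t\vdash_{\underline m}\mu_j$ satisfies $\mT(u)\subset\mT(t)$. The restriction $t_0:=u|_{\mu_j}$ is one such $t$, since (Definition~\ref{tableux torus}) the congruences defining $\mT(t_0)$ form a subset of those defining $\mT(u)$; so everything comes down to the uniqueness of $t_0$.

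\emph{Uniqueness.} Suppose $\mT(u)\subset\mT(t)$. For each box $b\in\mu_j$ the coordinate indexed by $t(b)$ must be constant along $\mT(u)$, which forces $t(b)\in u(\lambda^{j+1})$; writing $t(b)=u(\phi(b))$, the defining congruences further force the contents of $b$ and $\phi(b)$ to agree modulo $m_{t(b)}$, hence to be \emph{equal}, because $\G$ is generic (each $m_i$ is $0$ or exceeds $2g-2$, while the contents of the boxes of $\lambda$ lie in an interval of length at most $2g-2$). Thus $\phi\colon\mu_j\to\lambda^{j+1}$ is a content-preserving injection with $u\circ\phi=t$, and it remains to show that $\phi$ is the inclusion. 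Write $\phi(b)=b+\delta(b)\cdot(1,1)$ for the integer shift $\delta(b)$ of $b$ along its diagonal. Since $t$ and $u$ are strictly increasing along northeast diagonals and a diagonal of $\mu_j$ is an initial segment of the corresponding diagonal of $\lambda^{j+1}$, the map $\phi$ is order-preserving along diagonals, so $\delta\ge 0$. Comparing the inequality $t(b)<t(b')$, for $b'$ the box immediately to the right of or immediately above $b$ in $\mu_j$, with the row and column monotonicity of $u$ gives $\delta(b')\ge\delta(b)$; hence $\delta$ is non-decreasing along monotone paths toward the northeast corner $(W,r-j)$ of the rectangle $\mu_j$, where $W=g-d+r+\alpha_j$. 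Finally, because $\alpha$ is non-decreasing the box $(W+1,r-j+1)$ is not in $\lambda^{j+1}$, so $(W,r-j)$ is the last box of its diagonal in $\lambda^{j+1}$ and therefore $\delta(W,r-j)=0$. Thus $\delta\equiv 0$, $\phi$ is the inclusion, and $t=u|_{\mu_j}$. The main obstacle is exactly this last combinatorial step: that a content-preserving embedding of the rectangle $\mu_j$ into $\lambda^{j+1}$ compatible with an increasing filling of $\lambda^{j+1}$ must be the standard inclusion. This is where genericity of $\G$ — both the injectivity of displacement tableaux and the quantitative bound $m_i>2g-2$ — and the nesting $\mu_j\subset\lambda^{j+1}$ forced by $\alpha$ being non-decreasing are essential; the remaining steps are bookkeeping on top of Proposition~\ref{dimension}.
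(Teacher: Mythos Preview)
Your proof is correct and follows essentially the same strategy as the paper: the properness is the same inclusion--exclusion dimension count, and the second claim is reduced to showing that for every $u\vdash_{\underline m}\lambda^{j+1}$ the only $t\vdash_{\underline m}\mu_j$ with $\mT(u)\subset\mT(t)$ is $t=u|_{\mu_j}$. Your combinatorial argument is a clean repackaging of the paper's diagonal-by-diagonal induction: where the paper anchors at the diagonal $S_{k_j}$ (on which $\mu_j$ and $\lambda^{j+1}$ coincide) and propagates outward, you encode the same information in the shift function $\delta$, show it is nonnegative and nondecreasing toward the northeast, and use the very same anchor --- that $(W,r-j)$ is the last box of its diagonal in $\lambda^{j+1}$ because $(W+1,r-j+1)\notin\lambda^{j+1}$ --- to force $\delta\equiv 0$.
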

\begin{proof} The properness follows directly from dimension counting (Proposition \ref{dimension}), as $\lambda^{j+1}$ is the union of $\lambda^j$ and $\lambda_{j+1}$ while $\mu_j$ is the intersection of $\lambda^j$ and $\lambda_{j+1}$.

For the second conclusion it suffices to show that every real torus in $W_{j+1}(\G)$ is contained in exactly one torus in $W^{\mu_j}(\G)$. Take two tori $\mT(t)\subset W_{j+1}(\Gamma)$ and $\mT(t')\subset W^{\mu_j}(\G)$, where $t$ and $t'$ are $\underline m$-displacement tableux on $\lambda^{j+1}$ and $\mu_j$ respectively, such that $\mT(t)\subset \mT(t')$. We claim that $t'=t|_{\mu_j}$. 

It is easy to see that $t'(\mu_j)\subset t(\lambda^{j+1})$. On the other hand, let $S_k=\{(x,y)|x-y=k\}$ for all $k\in \mZ$. If $t(x,y)=t'(x',y')$, then $x-y\equiv x'-y'\pmod {m_{t(x,y)}}$, hence $x-y=x'-y'$ by the generality of $\G$. It follows that $t(\mu_j\cap S_k)\subset t'(\lambda^{j+1}\cap S_k)$ for all $k$. In particular, let $k_j=g-d+\alpha_j+j$, we have $$t(\mu_j\cap S_{k_j})= t'(\lambda^{j+1}\cap S_{k_j})$$ since $\mu_j\cap S_{k_j}= \lambda^{j+1}\cap S_{k_j}$. Therefore $t|_{\mu_j\cap S_{k_j}}= t'|_{\mu_{j}\cap S_{k_j}}$ as both $t$ and $t'$ are strictly increasing along rows and columns. 

It follows that $t'(r-j+k_j,r-j+1)\not\in t(\mu_j\cap S_{k_j-1})$, since this number is bigger than all numbers in $t'(\lambda^{j+1}\cap S_{k_j})=t(\mu_j\cap S_{k_j})$, thus greater that numbers in $ t(\mu_j\cap S_{k_j-1})$. It then follows that $t(\mu_j\cap S_{k_j-1})= t'(\mu_j\cap S_{k_j-1})$, therefore $t|_{\mu_j\cap S_{k_j-1}}= t'|_{\mu_{j}\cap S_{k_j-1}}$. 
Now one can check by induction that $t|_{\mu_j\cap S_{k}}= t'|_{\mu_{j}\cap S_{k}}$ for all $k\leq k_j$. 
Same argument shows that 
$t|_{\mu_j\cap S_{k}}= t'|_{\mu_{j}\cap S_{k}}$ 
for all $k\geq k_j$.
\end{proof}

\begin{proof}[Proof of Theorem \ref{lifting ramification}]
We prove by induction that $\mtrop(W_k(C))=W_k(\G)$ for all $0\leq k\leq r$. The $k=0$ case is in Theorem \ref{summary}. Now assume $\mtrop(W_j(C))=W_j(\G)$, we need to show that 
\begin{equation}\mtrop(W_j(C)\cap W^{\lambda_{j+1}}(C))=\mtrop(W_{j+1}(C))=W_{j+1}(\G).
\end{equation}
 
Let $U_j$ be as in Lemma \ref{intersect interior} and fix $w\in U_j$. As we only care about the local geometry near $w$, we may assume all Brill-Noether loci corresponding to $(C,\mcP)$ (resp. $(\G,P)$) are contained in a polytopal domain (resp. polytope) in $T_N$ (resp. $N_\mR$). We may also assume that $w$ is the origin. Take a polytope $\Lambda \subset \mathrm{relint}(W^{\mu_j}(\G))$ such that $w\in \mathrm{relint}(\Lambda)$. According to Proposition \ref{wrd local} we have the following commutative diagram:
$$\begin{tikzcd} 
 W^{\mu_j}(C)^\man\cap \mcU_\Lambda \rar{\mtrop}\dar{ \pi_\Lambda} &\Lambda\dar{\pi} \\ \wmcU_\Lambda\rar{\mtrop} &\pi(\Lambda)
\end{tikzcd}$$
where both vertical arrows are isomorphisms induced by the natural projection from $N$ to $N_\Lambda$ as in loc.cit..
 
Denote $$W_\Lambda^{\lambda_{j+1}}=W^{\lambda_{j+1}}(C)^\man\cap\mcU_\Lambda\mathrm{\ and \ }W_{\Lambda, j}=W_j(C)^\man\cap \mcU_\Lambda.$$According to Lemma \ref{intersect interior} $\mtrop( \pi_\Lambda(W_\Lambda^{\lambda_{j+1}}))$ and $\mtrop(\pi_\Lambda(W_{\Lambda,j}))$ intersect properly in $\pi(\Lambda)$, which is a polytope of maximal dimensional in $(N_\Lambda)_\mR$ that contains $\pi(w)$ as an interior point. Hence Theorem \ref{lifting polyhedral domain} implies that $\pi(w)\in\mtrop(\pi_\Lambda(W_\Lambda^{\lambda_{j+1}})\cap \pi_\Lambda(W_{\Lambda,j}))$, and that $$w\in\mtrop(W_\Lambda^{\lambda_{j+1}}\cap W_{\Lambda,j})\subset \mtrop(W_j(C)\cap W^{\lambda_{j+1}}(C)).$$

As $U_j$ is dense in $W_{j+1}(\G)$ and $U_j\subset\mtrop(W_j(C)\cap W^{\lambda_{j+1}}(C))$, we have $W_{j+1}(\G)\subset\mtrop(W_j(C)\cap W^{\lambda_{j+1}}(C))$. This proves (2), as the other direction of containment is trivial.
\end{proof}

\bibliographystyle{amsalpha}
\bibliography{1}
\end{document}